\newcommand{\C}{\mathbb C}
\newcommand{\Q}{\mathbb Q}
\newcommand{\Z}{\mathbb Z}
\newtheorem{thm}{Theorem}[section]
\newtheorem{Lemma}{Lemma}[section]
\newtheorem{Remark}{Remark}[section]
\newtheorem{Example}{Example}[section]
\newtheorem{Corollary}{Corollary}[section]
\begin{document}
\title{$\Z_q$-valued generalized bent functions in odd characteristics}

\author{Libo Wang$^1$,  Baofeng Wu$^2$, Zhuojun Liu$^1$}
 \date{{\small $^1$ KLMM, Academy of Mathematics and Systems Science, Chinese Academy of Sciences,
 Beijing 100190,  China}\\
 {\small $^2$ SKLOIS, Institute of Information Engineering, Chinese Academy of Sciences, Beijing 100093, China}}

\maketitle
\normalsize
\begin{abstract}
In this paper, we  investigate properties of functions from  $\Z_{p}^n$ to $\Z_q$, where $p$ is an odd prime  and $q$ is a positive integer
divided by $p$.  we present the sufficient and necessary conditions for bent-ness of such generalized Boolean functions in terms of classical $p$-ary bent functions,
when $q=p^k$. When $q$ is divided by $p$ but not a power of it, we give an sufficient condition for weakly regular gbent functions.
Some related constructions are also obtained.

\textbf{Keywords:} generalized bent (gbent) function, Walsh-Hadamard transform, Hadamard matrix, cyclotomic fields.
\end{abstract}

\section{Introduction}
\label{sec1}
As an interesting combinatorial object with the maximum Hamming distance to the set of all affine Boolean  functions,
bent functions were introduced by Rothaus \cite{Rothaus} in 1976. Such functions have been extensively
studied because of their important applications in cryptograph \cite{Carlet}, sequence design \cite{Olsen},
coding theory \cite{Canteaut}, and association schemes \cite{Pott}.  We refer to \cite{Cusick, Tokareva} for more on
cryptographic Boolean functions and bent functions.

In \cite{Schmidt}, Schmidt proposed the generalized bent  functions from
$\Z_2^n$ to $\Z_4$, which can be used to constant amplitude codes and $\Z_4$-linear codes for CDMA communications.
Later,  generalized bent  functions from $\Z_2^n$ to $\Z_8$ and $\Z_{16}$ were studied in \cite{Stanica} and \cite{Martinsen1},
respectively.  Recently, a generalization of bent functions from $\Z_{2}^n$ to $\Z_q$, where $q \geq 2$
is any positive even integer, have attracted  more and more attention. Existence, characterizations and constructions of them were studied by several authors \cite{liuhaiying,Hodzic,Martinsen,Tang}.
In fact, the study of generalizations of Boolean bent functions started as early as in 1985 . In \cite{Kumar} Kumar introduced the notion of generalized  bent functions from $\Z_q^n$ to $\Z_q$,
where $q$ is any positive integer. Such functions are often called  $p$-ary generalized bent functions when $q=p$ is an odd prime number.
As for generalized bent functions from $\Z_2^n$ to $\Z_q$, a natural problem is to generalize them to the $p$-ary case, i.e., studying generalized bent functions from $\Z_p^n$ to $\Z_q$ for an odd prime $p$. To the best of  our knowledge, there is no literature  discussing  such  generalized bent functions up to present. We mainly focus on the case when  $q$ is
divided by $p$ in the present paper.

Throughout this paper, let $\Z_{p}^n$ be an $n$-dimensional vector space over $\Z_p$   for an odd prime $p$,
$\Z_{p}$ (or $\Z_q$) be the ring of integer modulo $p$ (or $q$), and $\C$ be the field of complex numbers,
where $n, q$ are positive integers. If $\mathbf{x}=(x_1,...,x_n)$ and $\mathbf{y}=(y_1,...,y_n)$ are two
vectors in $\Z_{p}^n$, we define the  inner product by $\mathbf{x}\cdot\mathbf{y} = x_1y_1+ \cdots + x_ny_n$ (mod $p$) (without cause of confusion,  we always omit ``mod $p$" in the sequel). For a complex number $z=a+b\sqrt{-1}$, the absolute  value of
$z$ is $|z|=\sqrt{a^2+b^2}$ and $\bar{z}=a-b\sqrt{-1}$ denotes the complex conjugate of $z$, where $a$ and $b$ are real numbers.

A function from $\Z_p^n$ to $\Z_q$ is called a generalized  Boolean function on $n$ variables, whose set is denoted
by $\mathcal{GB}_n^q$. We emphasize that $q$ is a positive integer divided by $p$ in this paper. For  a function $f\in\mathcal{GB}_n^q$, the generalized Walsh-Hadamard transform, which is
a function $\mathcal{H}_f: \Z_p^n \rightarrow \C$, can be defined by
\begin{equation}
\label{01}
\mathcal{H}_f(\mathbf{u})= p^{-\frac{n}{2}} \sum \limits_{\mathbf{x} \in \Z_p^n} \zeta_p^{-\mathbf{u}\cdot\mathbf{x}} \zeta_{q}^{f(\mathbf{x})},
\end{equation}
for any $\mathbf{u} \in \Z_p^n$, where  $\zeta_p=e^{\frac{2 \pi \sqrt{-1}}{p}}$ and
$\zeta_q=e^{\frac{2 \pi \sqrt{-1}}{q}}$ represent the complex $p$-th and $q$-th primitive  roots of unity, respectively.
The inverse generalized Walsh-Hadamard transform of $f$ is
\begin{equation}
\label{02}
\zeta_q^{f(\mathbf{x})}= p^{-\frac{n}{2}} \sum \limits_{\mathbf{u} \in \Z_p^n} \zeta_p^{\mathbf{u}\cdot\mathbf{x}} \mathcal{H}_f(\mathbf{u}).
\end{equation}
We call the function $f$  a $\Z_q$-valued $p$-ary generalized  bent (gbent) function if $|\mathcal{H}_f(\mathbf{u})| =1$ for all $\mathbf{u} \in \Z_{p}^n$.
A gbent function $f$ is regular if there exists some generalized  Boolean  function $f^\ast$ satisfying
$\mathcal{H}_f(\mathbf{u})= \zeta_{q}^{f^\ast(\mathbf{u})}$ for any $\mathbf{u} \in \Z_p^n$. Such a function $f^\ast$ is called the dual of $f$.
From the inverse generalized Walsh-Hadamard transform, it is easy to see that the dual $f^\ast$ of a gbent function $f$ is also regular.
A gbent function $f$ is called weakly regular if there exists some generalized  Boolean  function $f^\ast$  and a complex $\alpha$
with unit magnitude satisfying
$\mathcal{H}_f(\mathbf{u})=\alpha \zeta_{q}^{f^\ast(\mathbf{u})}$ for any $\mathbf{u} \in \Z_p^n$. When $q=p$, the gbent function $f$ is just a
classical $p$-ary bent function.

The rest of the paper is organized as follows. In Section \ref{sec2}, from the theory of cyclotomic fields we prove that for gbent functions
$f \in \mathcal{GB}_n^{p^k}$,  $\mathcal{H}_f(\textbf{u}) = \alpha \zeta_{p^k}^{f^{\ast}(\mathbf{u})}$, where $ \alpha \in 　\{\pm1, \pm \sqrt{-1} \}$.
In  Section \ref{sec3},  we give the sufficient and necessary  conditions for gbent functions $f \in \mathcal{GB}_n^{p^k}$, and the sufficient conditions
for weakly regular gbent functions  in $ \mathcal{GB}_n^{q}$ when $q$ is divided by $p$ but not a power of $p$. In Section \ref{sec4},
we give some related constructions. Concluding remarks are given in Section \ref{sec5}.

\section{Preliminaries}
\label{sec2}
In this section we will give some results on cyclotomic fields, which will be used in the following sections. We also completely determine
that what  Walsh spectra values can attain, for any gbent function in $\mathcal{GB}_n^{p^k}$, where $k$ is a positive integer.
Firstly, we state some basic facts  on the cyclotomic fields $K=\Q(\zeta_{p^k})$, which can be found in any book on algebraic number theory,
for example \cite{Washington}.

Let $\mathcal{O}_K$ be the ring of integers of $K=\Q(\zeta_{p^k})$. It is well known that $\mathcal{O}_K=\Z[\zeta_{p^k}]$. Any nonzero ideal $A$
of $\mathcal{O}_K$ can be uniquely (up to the order) expressed as
\[A=P_1^{a_1} \cdots P_s^{a_s},\]
where $P_1, \cdots, P_s$ are distinct prime ideals of $\mathcal{O}_K$ and $a_i \geq 1$, for $1\leq i \leq s$. In other words, the set $S(K)$ of
all the nonzero ideals of $\mathcal{O}_K$ is a free multiplicative  communicative semigroup with a basis $B(K)$, the set of all nonzero  prime ideals
of $\mathcal{O}_K$. Such semigroup $S(K)$ can be extended to the commutative group $I(K)$, called the group of fractional ideals of $K$. Each element of $I(K)$,
called a fractional ideals, has the form $AB^{-1}$, where $A,B$ are ideals of $\mathcal{O}_K$. For each $\alpha \in K^{\ast}=K \setminus \{0\}$, $\alpha \mathcal{O}_K$
is a fractional ideals, called a principle fractional ideals, and we have $(\alpha \mathcal{O}_K)(\beta \mathcal{O}_K)=\alpha \beta \mathcal{O}_K$,
$(\alpha \mathcal{O}_K)^{-1}=(\alpha^{-1}) \mathcal{O}_K$. Therefore, the set $P(K)$ of all principle fractional ideals is a subgroup of $I(K)$.
Some results on $K$ are given in the following lemmas.

\begin{Lemma}
\label{Lemma2.1}
Let $k\geq 2$ and $K=\Q(\zeta_{p^k})$, then

$(i)$ The field extension $K/\Q$ is Galois of degree $(p-1)p^{k-1}$ and the Galois group  $Gal(K/ \Q)=\{ \sigma_j | ~ j \in \Z, (j,p)=1 \}$,
where the automorphism $\sigma_j$ of $K$ is defined by $\zeta_{p^k} \mapsto \zeta_{p^k}^j$.

$(ii)$  The ring of integers in $K$ is $\mathcal{O}_K = \Z [\zeta_{p^k}]$ and $\{\zeta_{p^k}^j|~0 \leq j \leq (p-1)p^{k-1}-1\}$ is an integral basis of $\mathcal{O}_K$.
The group of roots of unity in $\mathcal{O}_K$ is $W_K=\{\zeta_{{2p}^k}^j|~0 \leq j \leq 2 p^k-1\}$

$(iii)$  The principle ideal $(1-\zeta_{p^k}) \mathcal{O}_K$ is a prime ideal of $\mathcal{O}_K$ and the rational prime $p$ is totally ramified in $\mathcal{O}_K$,
i.e., $p \mathcal{O}_K=( (1-\zeta_{p^k})\mathcal{O}_K )^{(p-1)p^{k-1}}$.
\end{Lemma}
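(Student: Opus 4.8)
The three assertions are all classical facts about the $p^k$-th cyclotomic field, and the plan is to deduce them from elementary properties of the cyclotomic polynomial $\Phi_{p^k}(x)=\dfrac{x^{p^k}-1}{x^{p^{k-1}}-1}=\sum_{j=0}^{p-1}x^{jp^{k-1}}$, of which $\zeta_{p^k}$ is a root. For $(i)$ I would first show $\Phi_{p^k}$ is irreducible over $\Q$: the substitution $x\mapsto x+1$ produces a monic polynomial with constant term $\Phi_{p^k}(1)=p$ that is congruent to $x^{(p-1)p^{k-1}}$ modulo $p$ (since $(x+1)^{p^k}-1\equiv x^{p^k}$ and $(x+1)^{p^{k-1}}-1\equiv x^{p^{k-1}}\pmod p$), hence Eisenstein at $p$. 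Therefore $[K:\Q]=\deg\Phi_{p^k}=(p-1)p^{k-1}$. Since $K$ is the splitting field over $\Q$ of the separable polynomial $x^{p^k}-1$, the extension $K/\Q$ is Galois; every $\sigma\in\mathrm{Gal}(K/\Q)$ sends $\zeta_{p^k}$ to a primitive $p^k$-th root of unity $\zeta_{p^k}^j$ with $(j,p)=1$, and comparing $|\mathrm{Gal}(K/\Q)|=(p-1)p^{k-1}$ with $|(\Z/p^k\Z)^{\ast}|=(p-1)p^{k-1}$ shows $\sigma\mapsto j$ is a bijection, so $\mathrm{Gal}(K/\Q)=\{\sigma_j\mid (j,p)=1\}$.

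Next I would prove $(iii)$, since it feeds into $(ii)$. Set $\lambda=1-\zeta_{p^k}$. For $(j,p)=1$ the element $\dfrac{1-\zeta_{p^k}^j}{1-\zeta_{p^k}}=1+\zeta_{p^k}+\cdots+\zeta_{p^k}^{j-1}$ lies in $\Z[\zeta_{p^k}]$, and applying the same to $\zeta_{p^k}^j$ with $j'$ chosen so that $jj'\equiv 1\pmod{p^k}$ shows the reciprocal is also an algebraic integer; hence $1-\zeta_{p^k}^j$ and $\lambda$ are associates in $\mathcal{O}_K$. Evaluating $\Phi_{p^k}(x)=\prod_{(j,p)=1}(x-\zeta_{p^k}^j)$ at $x=1$ gives $p=\prod_{(j,p)=1}(1-\zeta_{p^k}^j)$, so $p\,\mathcal{O}_K=\lambda^{(p-1)p^{k-1}}\mathcal{O}_K$. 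Since $(p-1)p^{k-1}=[K:\Q]$, factoring $\lambda\mathcal{O}_K$ into primes and using the fundamental identity $\sum_{\mathfrak{p}\mid p}e_{\mathfrak{p}}f_{\mathfrak{p}}=[K:\Q]$ forces $\lambda\mathcal{O}_K$ to be the unique prime above $p$, with ramification index $(p-1)p^{k-1}$ and residue degree $1$; that is, $p$ is totally ramified.

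For $(ii)$ the crux, and the step I expect to be the main obstacle, is $\mathcal{O}_K=\Z[\zeta_{p^k}]$. I would argue as follows. By the same Eisenstein computation as in $(i)$, $\Phi_{p^k}(1-x)$ is Eisenstein at $p$ and is the minimal polynomial of $\lambda$; since a root of an Eisenstein polynomial at $p$ is a uniformizer generating the ring of integers of the completion, $\Z_p[\lambda]=\Z_p[\zeta_{p^k}]$ is the full ring of integers of $K$ at the prime above $p$. On the other hand, $\mathrm{disc}\bigl(1,\zeta_{p^k},\dots,\zeta_{p^k}^{(p-1)p^{k-1}-1}\bigr)=\pm N_{K/\Q}\bigl(\Phi_{p^k}'(\zeta_{p^k})\bigr)$ is, up to sign, a power of $p$ (differentiate $\Phi_{p^k}(x)\,(x^{p^{k-1}}-1)=x^{p^k}-1$ at $x=\zeta_{p^k}$ and take norms), so $[\mathcal{O}_K:\Z[\zeta_{p^k}]]$ is a power of $p$; combined with the local statement this index equals $1$, giving $\mathcal{O}_K=\Z[\zeta_{p^k}]$ and the integral basis $\{\zeta_{p^k}^j\mid 0\le j\le (p-1)p^{k-1}-1\}$. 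Finally, for $W_K$: it contains $-1$ and $\zeta_{p^k}$, hence $\zeta_{2p^k}$ and all its powers; conversely, if a primitive $m$-th root of unity lies in $K$ then $\Q(\zeta_m)\subseteq\Q(\zeta_{p^k})$, and since only $p$ ramifies in $K/\Q$ this kills every prime factor of $m$ other than $2$ and $p$, bounds the $2$-part of $m$ by $2$, and bounds the $p$-part by $p^k$ via $\varphi(p^b)\mid\varphi(p^k)$; hence $m\mid 2p^k$ and $W_K=\{\zeta_{2p^k}^j\mid 0\le j\le 2p^k-1\}$. The remainder is routine bookkeeping with $\Phi_{p^k}$ and the $efg$ identity.
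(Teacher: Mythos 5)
The paper does not prove this lemma at all: it is stated as a collection of ``basic facts\ldots which can be found in any book on algebraic number theory, for example \cite{Washington}'', so there is no internal argument to compare yours against. Your self-contained development is essentially the standard textbook proof and is correct in all three parts: Eisenstein applied to $\Phi_{p^k}(x+1)$ gives irreducibility and hence the degree and Galois group in $(i)$; the associate relation between $1-\zeta_{p^k}^j$ and $1-\zeta_{p^k}$ together with $\Phi_{p^k}(1)=p$ and the identity $\sum_{\mathfrak p\mid p}e_{\mathfrak p}f_{\mathfrak p}=[K:\Q]$ gives $(iii)$; and the combination of ``the index $[\mathcal O_K:\Z[\zeta_{p^k}]]$ is a power of $p$'' (via the norm of $\Phi_{p^k}'(\zeta_{p^k})$) with ``$\Z[\zeta_{p^k}]$ is $p$-maximal'' (via the Eisenstein/uniformizer argument in the completion) gives $(ii)$, and the ramification argument correctly pins down $W_K$ since $p$ is odd. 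Two steps are invoked rather than proved and you should either cite or expand them if this is meant to be fully self-contained: the fact that a root of an Eisenstein polynomial over $\Q_p$ generates the full ring of integers of the (totally ramified) completion, and the precise criterion for which primes ramify in $\Q(\zeta_m)$ (needed to rule out odd primes $\ell\neq p$ and $4\mid m$). Both are standard, so the overall logic is sound; your write-up simply supplies the proof the paper chose to outsource.
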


We know that an algebraic integer $x$ of an algebraic number field is a root of unity if and only if all its conjugates $\sigma(x)$ (where $\sigma \in Gal(K/ \Q)$ )
have unit magnitude. We use $\sigma^{\ast}$ to denote the automorphism performing complex conjugation, i.e., $\sigma^{\ast}(\zeta_{p^k})=\zeta_{p^k}^{-1}$.
For the cyclotomic fields $K=\Q(\zeta_{p^k})$, the  Galois group $Gal(K/ \Q)$ is an Abelian group,  which ensures that
\[x\sigma^{\ast}(x)=1 \Rightarrow \sigma(x) \cdot \sigma^{\ast}(\sigma(x))=1, ~~ \forall~\sigma \in  Gal(K/ \Q),  \]
and we therefore have

\begin{Lemma}
\label{Lemma2.2}
If $x \in K=\Q(\zeta_{p^k})$ is an algebraic integer having unit magnitude, then $x$ is a root of unity.
\end{Lemma}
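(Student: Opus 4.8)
The plan is to deduce the statement from the classical fact recalled just above the lemma: an algebraic integer all of whose Galois conjugates have unit magnitude is a root of unity. Since $x$ is assumed to be an algebraic integer in $K$, it suffices to upgrade the single hypothesis $|x|=1$ (under the fixed embedding of $K$ into $\C$) to the statement that \emph{every} conjugate $\sigma(x)$, $\sigma\in Gal(K/\Q)$, has absolute value $1$.

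First I would observe that $K=\Q(\zeta_{p^k})$ is closed under complex conjugation, since $\overline{\zeta_{p^k}}=\zeta_{p^k}^{-1}\in K$; hence complex conjugation restricts to the automorphism $\sigma^\ast\in Gal(K/\Q)$ sending $\zeta_{p^k}\mapsto\zeta_{p^k}^{-1}$ (this is $\sigma_{-1}$ in the notation of Lemma \ref{Lemma2.1}(i)). The hypothesis $|x|=1$ then reads $x\,\sigma^\ast(x)=x\bar x=1$. Next, for an arbitrary $\sigma\in Gal(K/\Q)$ apply $\sigma$ to this identity to get $\sigma(x)\,\sigma(\sigma^\ast(x))=1$. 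The crucial point is that $Gal(K/\Q)=\{\sigma_j\mid(j,p)=1\}$ is abelian by Lemma \ref{Lemma2.1}(i), so $\sigma\sigma^\ast=\sigma^\ast\sigma$ and therefore $\sigma(\sigma^\ast(x))=\sigma^\ast(\sigma(x))=\overline{\sigma(x)}$. This yields $\sigma(x)\,\overline{\sigma(x)}=|\sigma(x)|^2=1$ for every $\sigma$, i.e.\ all conjugates of $x$ lie on the unit circle. Invoking the classical criterion stated before the lemma (or, to be self-contained, the pigeonhole argument that the powers $x^m$ form algebraic integers of degree at most $[K:\Q]$ with all conjugates bounded by $1$, hence with uniformly bounded elementary symmetric functions, so only finitely many distinct $x^m$ occur and $x^i=x^j$ for some $i<j$), we conclude that $x$ is a root of unity; by Lemma \ref{Lemma2.1}(ii) in fact $x\in W_K$.

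There is no genuine obstacle here: the entire content of the argument is the commutation $\sigma\sigma^\ast=\sigma^\ast\sigma$, which is precisely why the abelian (cyclotomic) setting is used. The only points requiring a word of care are that $\sigma^\ast$ indeed lies in $Gal(K/\Q)$ (guaranteed by the CM-type closure under conjugation above) and that the images $\{\sigma(x):\sigma\in Gal(K/\Q)\}$ are exactly the full set of roots of the minimal polynomial of $x$ over $\Q$, so that ``unit magnitude for all $\sigma(x)$'' coincides with ``unit magnitude for all conjugates of $x$'' as required by the classical criterion.
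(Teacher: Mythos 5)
Your proposal is correct and follows essentially the same route as the paper: the paper's justification (given in the paragraph preceding the lemma) is precisely the observation that $x\sigma^{\ast}(x)=1$ propagates to $\sigma(x)\sigma^{\ast}(\sigma(x))=1$ for every $\sigma\in Gal(K/\Q)$ because the cyclotomic Galois group is abelian, followed by the classical criterion that an algebraic integer with all conjugates of unit magnitude is a root of unity. Your added remarks (that $\sigma^{\ast}$ really is complex conjugation restricted to $K$, and the pigeonhole proof of Kronecker's criterion) only make the argument more self-contained.
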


For a gbent function  $f \in \mathcal{GB}_n^{p^k}$, we will determine what the Walsh spectra values of $f$ can attain.
Firstly, we give the following lemma, which will be used to determine the Walsh spectra values of gbent functions in $\mathcal{GB}_n^{p^k}$.

\begin{Lemma}
[see \cite{Kumar}]
\label{Lemma2.3}
For a positive integer $q$,

$(i)$  If $q\equiv 0,1~~(mod~4)$, then $\sqrt{q} \in \Q(\zeta_q)$,

$(ii)$  If $q\equiv 2,3~~(mod~4)$, then $\sqrt{q} \in \Q(\zeta_{4q}) \backslash \Q(\zeta_{2q})$.
\end{Lemma}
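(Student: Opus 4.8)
The plan is to derive the whole statement from the quadratic Gauss sum. For a positive integer $q$ put $G(q)=\sum_{k=0}^{q-1}\zeta_q^{k^2}$, which visibly lies in $\Z[\zeta_q]\subseteq\Q(\zeta_q)$, and recall Gauss's evaluation: $G(q)=\sqrt q$ if $q\equiv1\pmod4$, $G(q)=(1+\sqrt{-1})\sqrt q$ if $q\equiv0\pmod4$, $G(q)=0$ if $q\equiv2\pmod4$, and $G(q)=\sqrt{-1}\,\sqrt q$ if $q\equiv3\pmod4$; equivalently, for odd $q$ one has $G(q)^2=(-1)^{(q-1)/2}q$. (One may quote this classical fact or reprove the relevant identities, which are elementary.) Part $(i)$ is then immediate: if $q\equiv1\pmod4$ then $\sqrt q=G(q)\in\Q(\zeta_q)$; if $q\equiv0\pmod4$ then $4\mid q$, so $\sqrt{-1}=\zeta_q^{q/4}\in\Q(\zeta_q)$, hence $1+\sqrt{-1}\in\Q(\zeta_q)\setminus\{0\}$ and $\sqrt q=(1+\sqrt{-1})^{-1}G(q)\in\Q(\zeta_q)$.

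For part $(ii)$ the inclusion $\sqrt q\in\Q(\zeta_{4q})$ is also short. If $q\equiv3\pmod4$ then $\sqrt q=-\sqrt{-1}\,G(q)$ with $G(q)\in\Q(\zeta_q)\subseteq\Q(\zeta_{4q})$ and $\sqrt{-1}=\zeta_{4q}^{q}\in\Q(\zeta_{4q})$, so $\sqrt q\in\Q(\zeta_{4q})$. If $q\equiv2\pmod4$, write $q=2q'$ with $q'$ odd; then $\sqrt 2=\zeta_8+\zeta_8^{-1}\in\Q(\zeta_8)$, while $\sqrt{q'}\in\Q(\zeta_{4q'})$ (by part $(i)$ when $q'\equiv1\pmod4$, and by the argument just given when $q'\equiv3\pmod4$, using $\sqrt{-1}\in\Q(\zeta_{4q'})$). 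Hence $\sqrt q=\sqrt2\cdot\sqrt{q'}$ lies in the compositum $\Q(\zeta_8)\Q(\zeta_{4q'})\subseteq\Q(\zeta_{8q'})=\Q(\zeta_{4q})$.

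The real content, and where I expect the main work, is the non-membership $\sqrt q\notin\Q(\zeta_{2q})$; it amounts to pinning down the conductor of the quadratic field $\Q(\sqrt q)$, and I would treat the two residues separately. If $q\equiv3\pmod4$ then $q$ is odd and $\Q(\zeta_{2q})=\Q(\zeta_q)$; from $G(q)=\sqrt{-1}\,\sqrt q\in\Q(\zeta_q)$ we get $\sqrt{-q}\in\Q(\zeta_q)$, so if $\sqrt q$ were also in $\Q(\zeta_q)$ then $\sqrt{-1}=\sqrt{-q}/\sqrt q\in\Q(\zeta_q)$, forcing $\Q(\zeta_{4q})=\Q(\zeta_q)$ and contradicting $[\Q(\zeta_{4q}):\Q]=2[\Q(\zeta_q):\Q]$. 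If $q\equiv2\pmod4$, write again $q=2q'$ with $q'$ odd, so $\Q(\zeta_{2q})=\Q(\zeta_{4q'})$, and assume $\sqrt{2q'}\in\Q(\zeta_{4q'})$; since $\pm G(q')$ is a square root of $(-1)^{(q'-1)/2}q'$ lying in $\Q(\zeta_{q'})$ and $\sqrt{-1}\in\Q(\zeta_{4q'})$, we get $\sqrt{q'}\in\Q(\zeta_{4q'})$ and hence $\sqrt2=\sqrt{2q'}/\sqrt{q'}\in\Q(\zeta_{4q'})$, whereas $\sqrt2\in\Q(\zeta_8)$ and $\Q(\zeta_8)\cap\Q(\zeta_{4q'})=\Q(\zeta_{\gcd(8,4q')})=\Q(\zeta_4)=\Q(\sqrt{-1})$ (as $q'$ is odd), while $\sqrt2\notin\Q(\sqrt{-1})$ because $2$ and $-1$ lie in different classes of $\Q^{\ast}/(\Q^{\ast})^2$ --- a contradiction. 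The auxiliary facts used here, namely $\Q(\zeta_m)\cap\Q(\zeta_n)=\Q(\zeta_{\gcd(m,n)})$ and the degree formula for cyclotomic fields, are classical and of the same elementary flavour as Lemma~\ref{Lemma2.1}; the only delicate point is that for $q\equiv2\pmod4$ the prime $2$ ramifies in both $\Q(\zeta_{2q})$ and $\Q(\zeta_{4q})$, so a naive ramification count does not separate them and the finer intersection computation above is genuinely needed.
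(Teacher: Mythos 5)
Your proof is correct. Note first that the paper itself supplies no argument for this lemma: it is stated with a bare citation to Kumar, Scholtz and Welch, so there is no internal proof to compare against. Your route --- Gauss's evaluation of $G(q)=\sum_{k=0}^{q-1}\zeta_q^{k^2}$ for the positive statements, and the intersection formula $\Q(\zeta_m)\cap\Q(\zeta_n)=\Q(\zeta_{\gcd(m,n)})$ together with the degree formula for the negative one --- is the standard argument and all the steps check out: the reduction of $q\equiv0\pmod4$ via $\sqrt{-1}=\zeta_q^{q/4}$, the splitting $\sqrt{q}=\sqrt2\sqrt{q'}$ for $q\equiv2\pmod4$ with $\sqrt2=\zeta_8+\zeta_8^{-1}$, the observation $\Q(\zeta_{2q})=\Q(\zeta_q)$ for odd $q$, and the final contradictions via $[\Q(\zeta_{4q}):\Q]=2[\Q(\zeta_q):\Q]$ and via $\Q(\zeta_8)\cap\Q(\zeta_{4q'})=\Q(\sqrt{-1})\not\ni\sqrt2$ are all sound. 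The only external inputs are the classical Gauss sum values and the cyclotomic intersection formula, both of which you flag explicitly; if one wanted a fully self-contained treatment one would either prove the sign of the Gauss sum (the only nontrivial part of its evaluation) or replace the intersection argument by the conductor criterion that $\Q(\sqrt2)\subseteq\Q(\zeta_n)$ forces $8\mid n$, but quoting them is entirely in keeping with how the paper treats Lemma~\ref{Lemma2.1}.
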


\begin{Lemma}
\label{Lemma2.4}
For $f \in \mathcal{GB}_n^{p^k}$, i.e., $f: \Z_{p}^n \rightarrow \Z_{p^k}$,  $\mathcal{H}_f(\mathbf{u})$ is a root of unity for any
$\mathbf{u} \in \Z_p^n.$
\end{Lemma}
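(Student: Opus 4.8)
The plan is to show that $\mathcal{H}_f(\mathbf{u})$ is an algebraic integer lying in a cyclotomic field $K$ of the form $\Q(\zeta_{p^m})$ for suitable $m$, and that it has unit magnitude; then Lemma \ref{Lemma2.2} immediately gives that it is a root of unity. First I would clear the normalizing factor: the quantity
\[
S := p^{\frac{n}{2}}\mathcal{H}_f(\mathbf{u}) = \sum_{\mathbf{x}\in\Z_p^n}\zeta_p^{-\mathbf{u}\cdot\mathbf{x}}\zeta_{p^k}^{f(\mathbf{x})}
\]
is a $\Z$-linear combination of roots of unity of order dividing $p^k$ (since $\zeta_p$ is a power of $\zeta_{p^k}$ when $k\geq 1$), hence $S\in\Z[\zeta_{p^k}]=\mathcal{O}_K$ with $K=\Q(\zeta_{p^k})$. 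So $S$ is an algebraic integer in $K$, and $|S|^2 = S\overline{S} = p^n$ because $f$ is gbent (i.e. $|\mathcal{H}_f(\mathbf{u})|=1$).

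Next I would handle the square root of $p^n$. If $n$ is even, $p^{n/2}\in\Z$ already, so $\mathcal{H}_f(\mathbf{u}) = S/p^{n/2}\in K$ is an algebraic number of unit magnitude; but I still need it to be an algebraic \emph{integer}. This is where I invoke Lemma \ref{Lemma2.3}: $\sqrt{p}\in\Q(\zeta_p)$ when $p\equiv 1\pmod 4$, and $\sqrt{p}\in\Q(\zeta_{4p})$ when $p\equiv 3\pmod 4$ (note $p$ is odd, so $p\equiv 1,2,3\pmod 4$, and $p\neq 2$ rules out the residue $2$; actually $p\equiv 1$ or $3$). In either case $\sqrt{p}$, and hence $\sqrt{p^n}=p^{\lfloor n/2\rfloor}\sqrt{p}^{\,n-2\lfloor n/2\rfloor}$, lies in $L:=\Q(\zeta_{4p^k})$, and moreover $\sqrt{p^n}$ divides — up to a unit — an appropriate power of the totally ramified prime $(1-\zeta_{p^k})$ by Lemma \ref{Lemma2.1}(iii). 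Concretely, work in $\mathcal{O}_L$: the ideal $(S)$ has norm a power of $p$, so $(S) = (1-\zeta_{p^k})^{e}\cdot(\text{unit ideal contributions})$ — more carefully, since $|S|^2=p^n$ and $S\overline S$ generates $p^n\mathcal{O}_L = ((1-\zeta_{p^k})\mathcal{O}_L)^{n(p-1)p^{k-1}}$, the principal ideal $(S)$ is exactly $((1-\zeta_{p^k}))^{n(p-1)p^{k-1}/2}$. Hence $S/\sqrt{p^n}$ generates the unit ideal in $\mathcal{O}_L$, i.e. $\mathcal{H}_f(\mathbf{u}) = S/\sqrt{p^n}$ is a unit in $\mathcal{O}_L$, in particular an algebraic integer in $L=\Q(\zeta_{4p^k})$, and it has unit magnitude by construction. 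By Lemma \ref{Lemma2.2} applied to $L$ (whose Galois group over $\Q$ is again abelian), $\mathcal{H}_f(\mathbf{u})$ is a root of unity.

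The main obstacle I anticipate is the bookkeeping with the prime ideal $(1-\zeta_{p^k})$ when passing from $K$ to the larger field $L=\Q(\zeta_{4p^k})$: I must check that $p$ is still totally ramified in $\mathcal{O}_L$ with a single prime above it (true, since $4$ and $p$ are coprime and $p$ is totally ramified in $\Q(\zeta_{p^k})$ while unramified in $\Q(\zeta_4)$, so the prime over $p$ in $L$ has ramification index $(p-1)p^{k-1}$ and the factorization of $p\mathcal{O}_L$ is governed by the splitting of $p$ in $\Q(i)$), and that the exponent arithmetic $n(p-1)p^{k-1}/2$ comes out to a nonnegative integer — which it does because $(p-1)$ is even. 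The only subtlety is that when $n$ is odd we genuinely need $\sqrt p\notin K$ and must enlarge to $L$; the argument above is uniform if I simply take $L=\Q(\zeta_{4p^k})\supseteq K$ from the start and note $\mathcal{O}_K\subseteq\mathcal{O}_L$. Once the ideal-theoretic identity $(S)=(1-\zeta_{p^k})^{n(p-1)p^{k-1}/2}$ is in hand, the conclusion is immediate. I would also remark that this shows $|\mathcal{H}_f(\mathbf u)|=1$ forces the Walsh value into $W_L$, which is the starting point for pinning down $\alpha\in\{\pm 1,\pm\sqrt{-1}\}$ in the next result.
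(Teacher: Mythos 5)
Your proposal is correct and follows essentially the same route as the paper: both arguments factor the principal ideal generated by $S=p^{n/2}\mathcal{H}_f(\mathbf u)$ using the total ramification of $p$ (Lemma \ref{Lemma2.1}(iii)), use invariance under complex conjugation to pin the exponent at $\tfrac{1}{2}n(p-1)p^{k-1}$, and finish with Lemma \ref{Lemma2.2}. The one divergence is the endgame: the paper squares so as to stay inside $K=\Q(\zeta_{p^k})$, concluding that $\mathcal{H}_f^2(\mathbf u)$ is a unit of $\mathcal{O}_K$ and hence that $\mathcal{H}_f(\mathbf u)$ is an algebraic integer, whereas you enlarge to $L=\Q(\zeta_{4p^k})$ and show $\mathcal{H}_f(\mathbf u)$ is itself a unit of $\mathcal{O}_L$. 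Your version is slightly more careful on a point the paper glosses over, namely that $\mathcal{H}_f(\mathbf u)$ need not lie in $K$ when $n$ is odd and $p\equiv 3\pmod 4$, so Lemma \ref{Lemma2.2} should really be invoked for the abelian extension $L$ (or one should cite Kronecker's theorem in general). One bookkeeping caution in your write-up: when $p\equiv 1\pmod 4$ the prime $p$ has two primes above it in $\mathcal{O}_L$ which are swapped by complex conjugation, so the identity $S\overline S=p^n$ in $\mathcal{O}_L$ alone does not force the exponents of the two primes in $(S)$ to be equal; you should first derive $(S)\mathcal{O}_K=\bigl((1-\zeta_{p^k})\mathcal{O}_K\bigr)^{n(p-1)p^{k-1}/2}$ in $\mathcal{O}_K$, where the prime over $p$ is unique and conjugation-stable (this is exactly what the paper does), and only then extend to $\mathcal{O}_L$. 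Since you already observed $S\in\mathcal{O}_K$, this is a one-line repair and the rest of your argument goes through.
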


\begin{proof}
Since $f$ is gbent in $\mathcal{GB}_n^{p^k}$, then
\[(p^{\frac{n}{2}}\mathcal{H}_f(\mathbf{u})) \sigma^{\ast}(p^{\frac{n}{2}}\mathcal{H}_f(\mathbf{u}))= p^{\frac{n}{2}}\mathcal{H}_f(u)  \overline{p^{\frac{n}{2}}\mathcal{H}_f(u)} = p^n.\]
From  condition (iii) of Lemma \ref{Lemma2.1}, we have
\[((p^{\frac{n}{2}}\mathcal{H}_f(\mathbf{u})) \mathcal{O}_K)\sigma^{\ast}((p^{\frac{n}{2}}\mathcal{H}_f(\mathbf{u})) \mathcal{O}_K) = ((1-\zeta_{p^k})\mathcal{O}_K )^{n(p-1)p^{k-1}}. \]
According to the uniqueness of decomposition of $((p^{\frac{n}{2}}\mathcal{H}_f(\mathbf{u})) \mathcal{O}_K)\sigma^{\ast}((p^{\frac{n}{2}}\mathcal{H}_f(u)) \mathcal{O}_K)$,
we can assume that
\[(p^{\frac{n}{2}}\mathcal{H}_f(\mathbf{u})) \mathcal{O}_K= ((1-\zeta_{p^k})\mathcal{O}_K )^l,\]
for some $1 \leq l \leq n(p-1)p^{k-1}$. Therefore, we can get
\[\sigma^{\ast}((p^{\frac{n}{2}}\mathcal{H}_f(\mathbf{u})) \mathcal{O}_K) = ((1-\zeta_{p^k}^{p^k-1})\mathcal{O}_K )^l.\]
The algebraic integer $1-\zeta_{p^k}$ and $1-\zeta_{p^k}^{p^k-1}$ generate the same ideal in $\mathcal{O}_K$ as each is a unit times the other. So we have
\[(p^{\frac{n}{2}}\mathcal{H}_f(\mathbf{u})) \mathcal{O}_K = \sigma^{\ast}((p^{\frac{n}{2}}\mathcal{H}_f(\mathbf{u})) \mathcal{O}_K) = ((1-\zeta_{p^k}^{p^k-1})\mathcal{O}_K )^{\frac{p-1}{2}np^{k-1}}.\]
Further, we have
\[(p^{\frac{n}{2}}\mathcal{H}_f(\mathbf{u})\mathcal{O}_K)^2=p^n\mathcal{O}_K.\]
Therefore, there exists a unit $u \in \mathcal{O}_K$  such that $(p^{\frac{n}{2}}\mathcal{H}_f(\mathbf{u}))^2/p^n=u$, i.e., ${\mathcal{H}}^2_f(\mathbf{u})=u$,
so the Fourier coefficient $\mathcal{H}_f(\mathbf{u})$ is a algebraic integer. As $f$ is gbent, this algebraic integer has unit magnitude and must be a unity root, by Lemma \ref{Lemma2.2}.
\end{proof}

\begin{Lemma}
\label{Lemma2.5}
For gbent function $f \in \mathcal{GB}_n^{p^k}$,  there exists a function $f^{\ast}: \Z_p^{n}\rightarrow \Z_{p^k}$ such that
\begin{eqnarray*}
\begin{split}
 \mathcal{H}_f(\mathbf{u})=\left\{
\begin{array}{l}
\pm \zeta_{p^k}^{f^{\ast}(\mathbf{u})}  ~~~~~~~~~~~~$if$~~$n$~~is~~even~~or~~$n$~~is ~~odd~~and~~p \equiv 1(mod~4),\\
\pm \sqrt{-1} \zeta_{p^k}^{f^{\ast}(\mathbf{u})} ~~~~ $if$~~ $n$~~is ~~odd~~and~~p \equiv 3(mod~4).
\end{array}
\right.
\end{split}
\end{eqnarray*}
\end{Lemma}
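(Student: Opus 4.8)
The plan is to locate each value $\mathcal{H}_f(\mathbf{u})$ inside an explicit cyclotomic field, invoke Lemma~\ref{Lemma2.4} to conclude it is a root of unity there, and then read off the admissible shapes from the structure of the roots of unity in that field. Put $\beta_{\mathbf{u}}:=\sum_{\mathbf{x}\in\Z_p^n}\zeta_p^{-\mathbf{u}\cdot\mathbf{x}}\zeta_{p^k}^{f(\mathbf{x})}=p^{n/2}\mathcal{H}_f(\mathbf{u})$, so that $\beta_{\mathbf{u}}\in\Z[\zeta_{p^k}]=\mathcal{O}_K$ with $K=\Q(\zeta_{p^k})$, and $\mathcal{H}_f(\mathbf{u})=p^{-n/2}\beta_{\mathbf{u}}$. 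If $n$ is even then $p^{-n/2}\in\Q$, so $\mathcal{H}_f(\mathbf{u})\in K$. If $n$ is odd, then $p^{-n/2}=p^{-(n-1)/2}\cdot p^{-1/2}$ with $p^{-(n-1)/2}\in\Q$, so $\mathcal{H}_f(\mathbf{u})\in K(\sqrt p)$; by Lemma~\ref{Lemma2.3}$(i)$ with $q=p$, if $p\equiv 1\pmod 4$ then $\sqrt p\in\Q(\zeta_p)\subseteq K$ and again $\mathcal{H}_f(\mathbf{u})\in K$, whereas by Lemma~\ref{Lemma2.3}$(ii)$ with $q=p$, if $p\equiv 3\pmod 4$ then $\sqrt p\in\Q(\zeta_{4p})\subseteq\Q(\zeta_{4p^k})=K(\sqrt{-1})$, so $\mathcal{H}_f(\mathbf{u})\in\Q(\zeta_{4p^k})$.

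Next I would apply Lemma~\ref{Lemma2.4}: each $\mathcal{H}_f(\mathbf{u})$ is a root of unity, hence it lies in the group of roots of unity of the cyclotomic field just identified. In the cases ``$n$ even'' and ``$n$ odd with $p\equiv1\pmod 4$'' this group is $W_K=\{\zeta_{2p^k}^j:0\le j\le 2p^k-1\}$ by Lemma~\ref{Lemma2.1}$(ii)$; since $\gcd(2,p^k)=1$, every element of $W_K$ is uniquely of the form $\pm\zeta_{p^k}^b$ with $0\le b<p^k$, so defining $f^{\ast}(\mathbf{u})=b$ (well defined by this uniqueness) gives $\mathcal{H}_f(\mathbf{u})=\pm\zeta_{p^k}^{f^{\ast}(\mathbf{u})}$. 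In the remaining case ``$n$ odd with $p\equiv3\pmod 4$'', the field $\Q(\zeta_{4p^k})$ has conductor divisible by $4$, so its group of roots of unity is exactly $\{\zeta_{4p^k}^j:0\le j\le 4p^k-1\}$; as $\gcd(4,p^k)=1$ and $\{\zeta_4^a:0\le a\le 3\}=\{\pm1,\pm\sqrt{-1}\}$, every such root of unity is uniquely $\pm\zeta_{p^k}^b$ or $\pm\sqrt{-1}\,\zeta_{p^k}^b$ with $0\le b<p^k$. Thus $\mathcal{H}_f(\mathbf{u})\in\{\pm\zeta_{p^k}^b,\ \pm\sqrt{-1}\,\zeta_{p^k}^b\}$, and it only remains to exclude the ``sign-only'' possibilities here.

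To do that, suppose $\mathcal{H}_f(\mathbf{u})=\varepsilon\,\zeta_{p^k}^b$ with $\varepsilon\in\{\pm1\}$. Then $\beta_{\mathbf{u}}=p^{n/2}\mathcal{H}_f(\mathbf{u})=\varepsilon\,p^{(n-1)/2}\,\zeta_{p^k}^b\,\sqrt p$ (here $n$ is odd), and since $\beta_{\mathbf{u}}\in\mathcal{O}_K\subseteq K$ while $\varepsilon\,p^{(n-1)/2}\,\zeta_{p^k}^b$ is a nonzero element of $K$, we would get $\sqrt p\in K=\Q(\zeta_{p^k})$. This contradicts $\sqrt p\notin\Q(\zeta_{p^k})$ for $p\equiv 3\pmod 4$: by Lemma~\ref{Lemma2.3}$(ii)$, $\sqrt p\notin\Q(\zeta_{2p})=\Q(\zeta_p)$, and since $\mathrm{Gal}(\Q(\zeta_{p^k})/\Q)$ is cyclic by Lemma~\ref{Lemma2.1}$(i)$, the field $\Q(\zeta_{p^k})$ has a unique quadratic subfield, which must coincide with the quadratic subfield of $\Q(\zeta_p)$ and hence cannot be $\Q(\sqrt p)$. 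Therefore $\mathcal{H}_f(\mathbf{u})=\pm\sqrt{-1}\,\zeta_{p^k}^{f^{\ast}(\mathbf{u})}$ with $f^{\ast}(\mathbf{u})=b$, completing the case analysis; in all cases $f^{\ast}:\Z_p^n\rightarrow\Z_{p^k}$ is well defined because the relevant representation of a root of unity is unique.

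The main obstacle is the last step. The localization in the first two paragraphs is routine cyclotomic bookkeeping once Lemmas~\ref{Lemma2.1}--\ref{Lemma2.4} are available, but genuinely ruling out the values $\pm\zeta_{p^k}^{f^{\ast}}$ when $n$ is odd and $p\equiv 3\pmod 4$ requires both the integrality constraint $p^{n/2}\mathcal{H}_f(\mathbf{u})\in\mathcal{O}_K$ and the input $\sqrt p\notin\Q(\zeta_{p^k})$ (equivalently $\sqrt{-1}\notin\Q(\zeta_{p^k})$). For $k=1$ the latter is exactly Lemma~\ref{Lemma2.3}$(ii)$, but for $k\ge 2$ one needs the extra observation that $\Q(\zeta_{p^k})$, having cyclic Galois group over $\Q$, has the same unique quadratic subfield as $\Q(\zeta_p)$; I would state this explicitly rather than appeal to Lemma~\ref{Lemma2.3} as written.
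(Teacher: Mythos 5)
Your proposal is correct and follows essentially the same route as the paper: locate $\mathcal{H}_f(\mathbf{u})$ in $\Q(\zeta_{p^k})$ or $\Q(\zeta_{4p^k})$ according to the parity of $n$ and the residue of $p$ modulo $4$, invoke Lemma~\ref{Lemma2.4} to see it is a root of unity, and read off the shape from the group of roots of unity of that field. The only real difference is that your third paragraph supplies an explicit justification (via $p^{n/2}\mathcal{H}_f(\mathbf{u})\in\mathcal{O}_K$ and the unique quadratic subfield of $\Q(\zeta_{p^k})$) for the step the paper dismisses with ``it is easily to get that $\mathcal{H}_f(\mathbf{u})\in\Q(\zeta_{4p^k})\setminus\Q(\zeta_{2p^k})$,'' which is a welcome tightening rather than a change of method.
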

\begin{proof}
(1) When $n$ is even, $p^{-\frac{n}{2}} \in \Q$, So $\mathcal{H}_f(\mathbf{u}) \in \Q(\zeta_{p^k})$, for any $\mathbf{u} \in \Z_p^n$.
From  Lemma \ref{Lemma2.4} and condition (ii) of Lemma \ref{Lemma2.1}, we have $\mathcal{H}_f(\mathbf{u}) = \zeta_{2p^k}^l$, for some
$0 \leq l \leq 2p^k-1$.

If $l$ is even, then set $f^{\ast}(\mathbf{u})=\frac{l}{2}$, we have $\mathcal{H}_f(\mathbf{u})=\zeta_{p^k}^{f^{\ast}(\mathbf{u})}$.

If $l$ is odd, then set $f^{\ast}(\mathbf{u})=\frac{p^k+l}{2}~(mod~p^k)$, we have $\mathcal{H}_f(\mathbf{u})=- \zeta_{p^k}^{f^{\ast}(\mathbf{u})}$.

(2) When $n$ is odd and $p\equiv 1 ~(mod~4)$, from Lemma \ref{Lemma2.3}, we have $\sqrt{p} \in \Q(\zeta_{p})  \subseteq \Q(\zeta_{p^k})$,
So $\mathcal{H}_f(\mathbf{u}) \in \Q(\zeta_{p^k})$, for any $\mathbf{u} \in \Z_p^n$. Similarly as the case $n$ is even, we can get
$\mathcal{H}_f(\mathbf{u}) = \pm \zeta_{p^k}^{f^{\ast}(\mathbf{u})}$.

(3) When $n$ is odd and $p\equiv 3 ~(mod~4)$, from Lemma \ref{Lemma2.3}, we have $\sqrt{p} \in \Q(\zeta_{4p}) \setminus \Q(\zeta_{2p})$.
It is easily to get that  $\mathcal{H}_f(\mathbf{u}) \in \Q(\zeta_{4p^k}) \setminus \Q(\zeta_{2p^k})$, from  Lemma \ref{Lemma2.4} and
condition (ii) of Lemma \ref{Lemma2.1}, we have $\mathcal{H}_f(\mathbf{u}) = \zeta_{4p^k}^{2l+1}$, for some $0 \leq l \leq 2p^k-1$.

If $p^k\equiv 1~(mod~4)$ and $l$ is even, then set $f^{\ast}(\mathbf{u})=\frac{3p^k+2l+1}{4} ~(mod~p^k)$, we have
$\mathcal{H}_f(\mathbf{u})=\sqrt{-1} \zeta_{p^k}^{f^{\ast}(\mathbf{u})}$; if $l$ is odd, then set
$f^{\ast}(\mathbf{u})=\frac{p^k+2l+1}{4} ~(mod~p^k)$, we have $\mathcal{H}_f(\mathbf{u})=-\sqrt{-1} \zeta_{p^k}^{f^{\ast}(\mathbf{u})}$.

If $p^k\equiv 3~(mod~4)$ and $l$ is even, then set $f^{\ast}(\mathbf{u})=\frac{p^k+2l+1}{4} ~(mod~p^k)$, we have
$\mathcal{H}_f(\mathbf{u})=-\sqrt{-1} \zeta_{p^k}^{f^{\ast}(\mathbf{u})}$; if $l$ is odd, then set
$f^{\ast}(\mathbf{u})=\frac{3p^k+2l+1}{4} ~(mod~p^k)$, we have $\mathcal{H}_f(\mathbf{u})=\sqrt{-1} \zeta_{p^k}^{f^{\ast}(\mathbf{u})}$.

This completes the proof.
\end{proof}

\begin{Remark}
\label{Remark2.1}
In Lemma \ref{Lemma2.5}, when $k=1$, the gbent function $f$ is just classical $p$-ary bent function. In this case, we have
\begin{eqnarray*}
\begin{split}
\mathcal{H}_f(\mathbf{u})=\left\{
\begin{array}{l}
\pm \zeta_p^{f^{\ast} (\mathbf{u})}  ~~~~~~~~~~~~$if$~~$n$~~is~~even~~or~~$n$~~is ~~odd~~and~~p \equiv 1(mod~4),\\
\pm \sqrt{-1} \zeta_p^{f^{\ast} (\mathbf{u})} ~~~~ $if$~~ $n$~~is ~~odd~~and~~p \equiv 3(mod~4).
\end{array}
\right.
\end{split}
\end{eqnarray*}
which  coincides with the result given by \c{C}e\c{s}melio\v{g}lu et al. in  \cite{Cesmelioglu}.
\end{Remark}

 \begin{Lemma}
 \label{Lemma2.6}
 Let $k$ is a positive integer, and $a\in \Z_p$,  then
 \begin{eqnarray*}
\begin{split}
\zeta_{p^k}^a=\frac{1}{p} \sum \limits_{i\in \Z_p} \left(\sum \limits_{j\in \Z_p} \zeta_p^{(a-i)j}\right) \zeta_{p^k}^i
\end{split}
\end{eqnarray*}
\end{Lemma}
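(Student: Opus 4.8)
The plan is to collapse the double sum on the right-hand side to a single term by invoking the orthogonality relation for the additive characters of $\Z_p$. First I would record the elementary fact that for any integer $m$,
\[
\sum_{j \in \Z_p} \zeta_p^{mj} =
\begin{cases}
p, & \text{if } p \mid m,\\
0, & \text{if } p \nmid m,
\end{cases}
\]
which follows at once by summing a finite geometric series and using $\zeta_p^p = 1$ (when $p \mid m$ every term is $1$, so the sum is $p$; when $p \nmid m$ the ratio $\zeta_p^m \neq 1$ and the geometric sum telescopes to $0$).

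Next I would apply this identity with $m = a - i$ for each fixed $i \in \Z_p$. Since $a$ and $i$ are here the integer representatives lying in $\{0, 1, \dots, p-1\}$, the divisibility condition $p \mid (a-i)$ is equivalent to the equality $a = i$. Consequently the inner sum $\sum_{j \in \Z_p} \zeta_p^{(a-i)j}$ equals $p$ when $i = a$ and vanishes for every other value of $i$.

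Finally, substituting this evaluation back into the right-hand side, the outer sum over $i$ reduces to its single surviving term at $i = a$, yielding $\frac{1}{p} \cdot p \cdot \zeta_{p^k}^{a} = \zeta_{p^k}^{a}$, which is exactly the claimed identity. I do not expect any real obstacle here; the only point worth a word of care is precisely the remark that, because $a$ and $i$ are genuine integers in the range $0$ to $p-1$ rather than residue classes, congruence modulo $p$ forces equality, so that exactly one term of the outer sum over $i$ survives and the exponent appearing on $\zeta_{p^k}$ is indeed $a$.
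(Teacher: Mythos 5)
Your proof is correct. It rests on exactly the same fact as the paper's proof --- the orthogonality relation $\sum_{j\in\Z_p}\zeta_p^{mj}=p$ if $p\mid m$ and $0$ otherwise --- but you apply it directly to collapse the double sum, whereas the paper packages that relation as the matrix identity $\mathcal{V}_p(\zeta_p)\mathcal{V}_p(\zeta_p^{-1})=p\,\mathrm{I}_p$ for the generalized Hadamard matrix, introduces the auxiliary maps $h_j(z)=\sum_{i\in\Z_p}\zeta_p^{-ji}z^i$, inverts the matrix relation to obtain $z^a=\frac{1}{p}\sum_{i\in\Z_p}\bigl(\sum_{j\in\Z_p}\zeta_p^{(a-i)j}\bigr)z^i$ for an arbitrary $z\in\C$, and only then specializes $z=\zeta_{p^k}$. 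The two arguments are mathematically equivalent; yours is shorter and makes explicit the one point the paper leaves implicit, namely that because $a$ and $i$ are integer representatives in $\{0,\dots,p-1\}$ the condition $p\mid(a-i)$ forces $i=a$, so that the surviving exponent on $\zeta_{p^k}$ is literally $a$ (this matters since $\zeta_{p^k}^i$ is not invariant under $i\mapsto i+p$). What the paper's heavier setup buys is reusable scaffolding: the Hadamard-matrix viewpoint and the decomposition into the maps $h_j$ foreshadow the $\gamma_{\mathbf a}$ basis of Lemmas 2.8--2.9 and the spectral decomposition in Theorem 3.1.1, whereas your argument proves the lemma and nothing more. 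As a standalone proof of this statement, yours is complete and arguably preferable.
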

\begin{proof}
Let $\mathcal{V}_p(\zeta_p)$ and $\mathcal{V}_p(\zeta_p^{-1})$ be the $p \times p$  matrix:
\begin{gather*}
\mathcal{V}_p(\zeta_p)=
\begin{pmatrix}
1 &  1 & \cdots & 1 \\
1 &  \zeta_p & \cdots & \zeta_p^{p-1}\\
\vdots & \vdots & \ddots & \vdots\\
1 &　\zeta_p^{p-1}  & \cdots & \zeta_p^{(p-1)(p-1)}
\end{pmatrix}
\end{gather*}
and
\begin{gather*}
\mathcal{V}_p(\zeta_p^{-1})=
\begin{pmatrix}
1 &  1 & \cdots & 1 \\
1 &  \zeta_p^{-1} & \cdots & \zeta_p^{-(p-1)}\\
\vdots & \vdots & \ddots & \vdots\\
1 &　\zeta_p^{-(p-1)}  & \cdots & \zeta_p^{-(p-1)(p-1)}
\end{pmatrix}.
\end{gather*}
In fact, we know that  $\mathcal{V}_p(\zeta_p)$ is a generalized Hadamard matrix, i.e., $\mathcal{V}_p(\zeta_p) (\mathcal{V}_p(\overline{\zeta_p}))^{\mathrm{T}}= p \mathrm{I}_p$,
and $(\mathcal{V}_p(\overline{\zeta_p}))^{\mathrm{T}}= \mathcal{V}_p(\zeta_p^{-1})$, therefore, we have
\begin{equation}
\label{03}
\mathcal{V}_p(\zeta_p)\mathcal{V}_p(\zeta_p^{-1}) = p \mathrm{I}_p,
\end{equation}
where $\mathrm{I}_p$ stands for the identity matrix of size $p$. Define now a collection of maps from $\C$ to itself by setting
\begin{equation*}
\left(
\begin{array}{c}
h_0(z)\\
h_1(z)\\
\vdots\\
h_{p-1}(z)
\end{array}
\right)  =
\mathcal{V}_p(\zeta_p^{-1})
\left(
\begin{array}{cccc}
1\\
z\\
\vdots\\
z^{p-1}
\end{array}
\right)
\end{equation*}
or equivalently, for any $i \in \Z_p$,
\begin{equation}
\label{04}
h_j(z)=\sum \limits_{i \in \Z_p} \zeta_p^{-ji}z^i.
\end{equation}
Furthermore, according to (\ref{03}), one has, for any $z \in \C$,\
\begin{equation*}
\left(
\begin{array}{cccc}
1\\
z\\
\vdots\\
z^{p-1}
\end{array}
\right)
 =\frac{1}{p}\mathcal{V}_p(\zeta_p)
\left(
\begin{array}{c}
h_0(z)\\
h_1(z)\\
\vdots\\
h_{p-1}(z)
\end{array}
\right)
\end{equation*}
that is, for any $a \in \Z_p$,
\begin{equation}
\label{05}
z^a=\frac{1}{p} \sum \limits_{j \in \Z_p} \zeta_p^{ja}h_j(z).
\end{equation}
Then plugging  Eq. (\ref{04}) into Eq. (\ref{05}),  we have
\begin{equation}
\label{06}
z^a=\frac{1}{p} \sum \limits_{i \in \Z_p} \Big(\sum \limits_{j \in \Z_p}  \zeta_p^{(a-i)j} \Big) z^i.
\end{equation}
If we set $z= \zeta_{q^k}$ and plugging it into Eq. (\ref{06}), then we get
\begin{equation*}
\zeta_{p^k}^a=\frac{1}{p} \sum \limits_{i \in \Z_p} \Big(\sum \limits_{j \in \Z_p}  \zeta_{p}^{(a-i)j} \Big) \zeta_{p^k}^i.
\end{equation*}
This completes the proof.
\end{proof}

\begin{Lemma}
\label{Lemma2.7}
$(i)$. $\{1, \zeta_{p^k},\zeta_{p^k}^2,\ldots,\zeta_{p^k}^{p^{k-1}-1}\}$ is a basis of  $\mathbb{Q}(\zeta_p, \zeta_{p^k})$
over $\mathbb{Q}(\zeta_p)$;\\
$(ii)$. $\{1, \zeta_{p^k},\zeta_{p^k}^2,\ldots,\zeta_{p^k}^{p^{k-1}-1}\}$ is a basis of  $\mathbb{Q}(\zeta_p,\sqrt{-1}, \zeta_{p^k})$
over $\mathbb{Q}(\zeta_p,\sqrt{-1})$.
\end{Lemma}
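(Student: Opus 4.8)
The plan is to establish both parts simultaneously by counting degrees of field extensions, since the only difference between $(i)$ and $(ii)$ is whether we adjoin $\sqrt{-1}$ to the base field. First I would observe that $\zeta_{p^k}$ is a root of the polynomial $X^{p^{k-1}} - \zeta_p \in \mathbb{Q}(\zeta_p)[X]$, because $\zeta_{p^k}^{p^{k-1}} = e^{2\pi\sqrt{-1}\,p^{k-1}/p^k} = e^{2\pi\sqrt{-1}/p} = \zeta_p$. Hence $[\mathbb{Q}(\zeta_p,\zeta_{p^k}):\mathbb{Q}(\zeta_p)] \le p^{k-1}$, and so the $p^{k-1}$ powers $\{1,\zeta_{p^k},\dots,\zeta_{p^k}^{p^{k-1}-1}\}$ span $\mathbb{Q}(\zeta_p,\zeta_{p^k})$ over $\mathbb{Q}(\zeta_p)$. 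To get that they form a basis it then suffices to show the degree is exactly $p^{k-1}$, i.e. that the spanning set is linearly independent.

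For the degree count I would use the tower $\mathbb{Q} \subseteq \mathbb{Q}(\zeta_p) \subseteq \mathbb{Q}(\zeta_p,\zeta_{p^k}) = \mathbb{Q}(\zeta_{p^k})$, where the last equality holds because $\zeta_p = \zeta_{p^k}^{p^{k-1}} \in \mathbb{Q}(\zeta_{p^k})$. By the multiplicativity of degrees and Lemma \ref{Lemma2.1}(i), which gives $[\mathbb{Q}(\zeta_{p^k}):\mathbb{Q}] = (p-1)p^{k-1}$, together with the classical fact $[\mathbb{Q}(\zeta_p):\mathbb{Q}] = p-1$, we obtain
\[
[\mathbb{Q}(\zeta_p,\zeta_{p^k}):\mathbb{Q}(\zeta_p)] = \frac{(p-1)p^{k-1}}{p-1} = p^{k-1}.
\]
Combined with the spanning statement above, this proves $(i)$: a spanning set of size equal to the dimension is a basis.

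For $(ii)$ I would run the same argument over the base field $L = \mathbb{Q}(\zeta_p,\sqrt{-1})$. Again $\zeta_{p^k}$ satisfies $X^{p^{k-1}} - \zeta_p$ over $L$, so the same $p^{k-1}$ powers span $L(\zeta_{p^k}) = \mathbb{Q}(\zeta_p,\sqrt{-1},\zeta_{p^k})$ over $L$, and it remains to check $[L(\zeta_{p^k}):L] = p^{k-1}$. Since $L(\zeta_{p^k}) = \mathbb{Q}(\sqrt{-1},\zeta_{p^k})$ and $\sqrt{-1} = \zeta_4$, this field equals $\mathbb{Q}(\zeta_{\mathrm{lcm}(4,p^k)})$, whose degree over $\mathbb{Q}$ is $\varphi(\mathrm{lcm}(4,p^k))$. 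Because $p$ is odd, $\mathrm{lcm}(4,p^k) = 4p^k$ and $\varphi(4p^k) = \varphi(4)\varphi(p^k) = 2(p-1)p^{k-1}$. Similarly $[L:\mathbb{Q}] = [\mathbb{Q}(\zeta_{4p}):\mathbb{Q}] = \varphi(4p) = 2(p-1)$ (using $\gcd(4,p)=1$), so by multiplicativity $[L(\zeta_{p^k}):L] = 2(p-1)p^{k-1}/(2(p-1)) = p^{k-1}$, and $(ii)$ follows exactly as before.

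The only genuinely delicate point is the identity $\mathbb{Q}(\zeta_p,\zeta_{p^k}) = \mathbb{Q}(\zeta_{p^k})$ and its analogue $\mathbb{Q}(\zeta_p,\sqrt{-1},\zeta_{p^k}) = \mathbb{Q}(\zeta_{4p^k})$; both reduce to the composite-of-cyclotomic-fields formula $\mathbb{Q}(\zeta_m)\mathbb{Q}(\zeta_n) = \mathbb{Q}(\zeta_{\mathrm{lcm}(m,n)})$, which is standard, so I do not anticipate a real obstacle. One could alternatively avoid degree arithmetic entirely and prove linear independence directly, but the tower-of-degrees route is cleaner and leans only on Lemma \ref{Lemma2.1}(i) and elementary facts about $\varphi$.
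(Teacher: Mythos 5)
Your proof is correct, but it takes a genuinely different route from the paper. You argue by counting degrees: the powers $\{1,\zeta_{p^k},\ldots,\zeta_{p^k}^{p^{k-1}-1}\}$ span because $\zeta_{p^k}$ satisfies $X^{p^{k-1}}-\zeta_p$ over the base field, and they must then be a basis because the tower formula together with $[\Q(\zeta_{p^k}):\Q]=(p-1)p^{k-1}$, $[\Q(\zeta_p):\Q]=p-1$ (and, for part (ii), $\varphi(4p^k)=2(p-1)p^{k-1}$, $\varphi(4p)=2(p-1)$) forces the relative degree to equal $p^{k-1}$. The paper instead proves linear independence directly: it writes each coefficient $a_i\in\Q(\zeta_p)$ in terms of powers of $\zeta_p=\zeta_{p^k}^{p^{k-1}}$, expands the hypothetical relation $\sum_i a_i\zeta_{p^k}^i=0$ as a $\Q$-linear relation among powers $\zeta_{p^k}^{jp^{k-1}+i}$, and appeals to the integral basis of $\Q(\zeta_{p^k})$ over $\Q$; for part (ii) it additionally separates real and imaginary parts and uses that $\sqrt{-1}\notin\Q(\zeta_{p^k})$. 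Your version buys a cleaner and more robust argument: it needs only multiplicativity of degrees and the standard formula $\Q(\zeta_m)\Q(\zeta_n)=\Q(\zeta_{\operatorname{lcm}(m,n)})$, and it sidesteps a delicate point in the paper's computation, namely that the exponents $jp^{k-1}+i$ with $0\le j\le p-1$ range up to $p^k-1$ and so do not all lie in the integral basis $\{\zeta_{p^k}^j : 0\le j\le (p-1)p^{k-1}-1\}$, which makes the paper's step ``thus all $a_{ij}=0$'' require extra justification. The paper's approach, on the other hand, is self-contained modulo its own Lemma \ref{Lemma2.1}(ii) and avoids invoking the compositum formula and Euler's totient computations.
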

\begin{proof}
(i). It is easy to see that we only need to prove $\{1, \zeta_{p^k},\zeta_{p^k}^2,\ldots,\zeta_{p^k}^{p^{k-1}-1}\}$
is linear independently over $\mathbb{Q}(\zeta_p)$.\\
Suppose that there exists $a_i= \sum_{j=0}^{p-1}a_{ij}\zeta_p^{j} \in \mathbb{Q}(\zeta_p)$, $ 0\leq i \leq p^{k-1}-1$,
such that
\[\sum \limits_{i=0}^{p^{k-1}-1} a_i \zeta_{p^k}^{i}=0,\]
i.e.,
\[\sum \limits_{i=0}^{p^{k-1}-1} \sum \limits_{j=0}^{p-1}a_{ij}\zeta_p^{j} \zeta_{p^k}^{i}=
\sum \limits_{i=0}^{p^{k-1}-1} \sum \limits_{j=0}^{p-1}a_{ij} \zeta_{p^k}^{jp^{k-1}+i}=0.\]
It is well known that $\{1, \zeta_{p^k},\zeta_{p^k}^2,\ldots,\zeta_{p^k}^{(p-1)p^{k-1}-1}\}$ is a basis of $\mathbb{Q}(\zeta_{p^k})$ over $\mathbb{Q}$, thus
all $a_{ij}=0$, i.e., all $a_i=0$. So (i) holds.\\
(ii). Suppose that there exists $a_i= \sum_{j=0}^{p-1}a_{ij}\zeta_p^{j} \in \mathbb{Q}(\zeta_p,\sqrt{-1})$, $ 0\leq i \leq p^{k-1}-1$,
and $a_{ij}=b_{ij}+c_{ij}\sqrt{-1}$,
such that
\[\sum \limits_{i=0}^{p^{k-1}-1} a_i \zeta_{p^k}^{i}=0,\]
i.e.,
\begin{eqnarray*}
\begin{split}
\sum \limits_{i=0}^{p^{k-1}-1} \sum \limits_{j=0}^{p-1}a_{ij}\zeta_p^{j} \zeta_{p^k}^{i}&=
\sum \limits_{i=0}^{p^{k-1}-1} \sum \limits_{j=0}^{p-1}a_{ij} \zeta_{p^k}^{jp^{k-1}+i}\\
&=\sum \limits_{i=0}^{p^{k-1}-1} \sum \limits_{j=0}^{p-1}b_{ij} \zeta_{p^k}^{jp^{k-1}+i}
+\sqrt{-1}\sum \limits_{i=0}^{p^{k-1}-1} \sum \limits_{j=0}^{p-1}c_{ij} \zeta_{p^k}^{jp^{k-1}+i}\\
&=0.
\end{split}
\end{eqnarray*}
If $\sum_{i=0}^{p^{k-1}-1} \sum _{j=0}^{p-1}b_{ij} \zeta_{p^k}^{jp^{k-1}+i} \neq 0$, then $\sqrt{-1} \in \mathbb{Q}(\zeta_{p^k})$, which is a contradiction.
Therefore, $\sum_{i=0}^{p^{k-1}-1} \sum_{j=0}^{p-1}b_{ij} \zeta_{p^k}^{jp^{k-1}+i} =\sum_{i=0}^{p^{k-1}-1} \sum _{j=0}^{p-1}c_{ij} \zeta_{p^k}^{jp^{k-1}+i}= 0$,
similarly as (i), all $b_{ij}$ and $c_{ij}$ equal to 0, i.e., all $a_i=0$. So (ii) holds.
\end{proof}

\begin{Lemma}
\label{Lemma2.8}
Let $\gamma_{\textbf{a}}=\sum_{\textbf{v}\in \Z_{p}^{k-1}} \zeta_p^{-\textbf{a} \cdot \textbf{v}} \zeta_{p^k}^{\sum_{j=1}^{k-1}v_jp^{k-1-j}}$,
where $\textbf{a} \in \Z_p^{k-1}$ and $\textbf{v}=(v_1,v_2,\ldots, v_{k-1}) \in \Z_p^{k-1}$. Then
\[\zeta_{p^k}^{e}=\frac{1}{p^{k-1}} \sum \limits_{\textbf{a}\in \Z_p^{k-1}} \zeta_{p}^{\textbf{a}\cdot \textbf{u}} \gamma_{\textbf{a}} \]
where $e=\sum_{j=1}^{k-1} u_j p^{k-1-j}$ and $\textbf{u}=(u_1,u_2,\ldots, u_{k-1}) \in \Z_p^{k-1}$.
\end{Lemma}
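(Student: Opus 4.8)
The plan is to substitute the definition of $\gamma_{\textbf{a}}$ into the right-hand side and exchange the order of the two summations. Writing out
\[
\frac{1}{p^{k-1}}\sum_{\textbf{a}\in\Z_p^{k-1}}\zeta_p^{\textbf{a}\cdot\textbf{u}}\gamma_{\textbf{a}}
=\frac{1}{p^{k-1}}\sum_{\textbf{a}\in\Z_p^{k-1}}\sum_{\textbf{v}\in\Z_p^{k-1}}\zeta_p^{\textbf{a}\cdot(\textbf{u}-\textbf{v})}\,\zeta_{p^k}^{\sum_{j=1}^{k-1}v_jp^{k-1-j}},
\]
and swapping the sums, the inner sum over $\textbf{a}$ factors coordinatewise as $\prod_{i=1}^{k-1}\sum_{a_i\in\Z_p}\zeta_p^{a_i(u_i-v_i)}$. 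This reduces the whole computation to the additive character theory of $\Z_p$.

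Next I would invoke the standard orthogonality relation for the additive characters of $\Z_p$: each factor $\sum_{a_i\in\Z_p}\zeta_p^{a_i(u_i-v_i)}$ equals $p$ when $u_i\equiv v_i\pmod p$ and equals $0$ otherwise. Since the components of $\textbf{u}$ and of $\textbf{v}$ are taken from $\{0,1,\dots,p-1\}$, the product $\prod_{i=1}^{k-1}\sum_{a_i\in\Z_p}\zeta_p^{a_i(u_i-v_i)}$ is therefore $p^{k-1}$ if $\textbf{v}=\textbf{u}$ and $0$ for every other $\textbf{v}\in\Z_p^{k-1}$. Collapsing the sum over $\textbf{v}$ to the single surviving term $\textbf{v}=\textbf{u}$ leaves $\frac{1}{p^{k-1}}\cdot p^{k-1}\cdot\zeta_{p^k}^{\sum_{j=1}^{k-1}u_jp^{k-1-j}}=\zeta_{p^k}^{e}$, which is exactly the claimed identity.

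I do not expect any serious obstacle here; the only point requiring a little care is the bookkeeping of representatives. The exponent $\sum_{j}v_jp^{k-1-j}$ is well defined only once we fix the representatives of the $v_j$ in $\{0,\dots,p-1\}$ (changing $v_1$ by $p$ multiplies $\zeta_{p^k}^{\sum_{j}v_jp^{k-1-j}}$ by $\zeta_p\neq 1$), so the conclusion $\textbf{v}=\textbf{u}$ in $\Z_p^{k-1}$ must be read as genuine equality of these chosen representatives in order to identify the surviving exponent with $e$; with that convention in place the argument is complete. Alternatively, one can derive the statement inductively from Lemma~\ref{Lemma2.6} by peeling off one digit at a time, but the direct character-sum computation above is shorter and self-contained.
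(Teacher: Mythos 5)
Your argument is correct and is essentially the paper's own proof: both expand $\gamma_{\textbf{a}}$, interchange the two sums, and apply the orthogonality relation $\sum_{\textbf{a}\in\Z_p^{k-1}}\zeta_p^{\textbf{a}\cdot(\textbf{u}-\textbf{v})}=p^{k-1}$ if $\textbf{v}=\textbf{u}$ and $0$ otherwise to collapse the sum to the single term $\textbf{v}=\textbf{u}$. Your added remark about fixing representatives in $\{0,\dots,p-1\}$ is a sensible precaution the paper leaves implicit, but it does not change the route.
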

\begin{proof}
For simplicity, denote
\[A=\frac{1}{p^{k-1}} \sum \limits_{\textbf{a}\in \Z_p^{k-1}} \zeta_{p}^{\textbf{a}\cdot \textbf{u}} \gamma_{\textbf{a}}.\]
Then, we have
\begin{eqnarray*}
\begin{split}
A&=&\frac{1}{p^{k-1}} \sum \limits_{\textbf{a}\in \Z_p^{k-1}} \zeta_{p}^{\textbf{a}\cdot \textbf{u}}
\sum_{\textbf{v}\in \Z_{p}^{k-1}} \zeta_p^{-\textbf{a} \cdot \textbf{v}} \zeta_{p^k}^{\sum_{j=1}^{k-1}v_jp^{k-1-j}}\\
&=&\frac{1}{p^{k-1}}\sum_{\textbf{v}\in \Z_{p}^{k-1}} \zeta_{p^k}^{\sum_{j=1}^{k-1}v_jp^{k-1-j}}
\sum \limits_{\textbf{a}\in \Z_p^{k-1}} \zeta_{p}^{\textbf{a}\cdot (\textbf{u}-\textbf{v})}.
\end{split}
\end{eqnarray*}
Since
$
\zeta_{p}^{\textbf{a}\cdot (\textbf{u}-\textbf{v})}=\left\{
\begin{array}{l}
0, ~~~~~~~~~~~~ \textbf{u}\neq \textbf{v}\\
p^{k-1},~~~~~~\textbf{u}= \textbf{v}
\end{array}
\right.
$.
This leads to
\[A=\frac{1}{p^{k-1}}  \cdot p^{k-1} \zeta_{p^k}^{\sum_{j=1}^{k-1}u_jp^{k-1-j}}= \zeta_{p^k}^e.\]
This completes the proof.
\end{proof}

\begin{Remark}
\label{Remark2.2}
Note that $\{1, \zeta_{p^k},\zeta_{p^k}^2,\ldots,\zeta_{p^k}^{p^{k-1}-1}\}$ is a basis of  $\mathbb{Q}(\zeta_p, \zeta_{p^k})$
over $\mathbb{Q}(\zeta_p)$, and $\zeta_{p^k}^e, 0 \leq e \leq p^{k-1}-1$, can be expressed by $\gamma_{\textbf{a}}, \textbf{a}\in \Z_p^{k-1}$, where the coefficients can form a non-singular matrix over  $\mathbb{Q}(\zeta_p)$. So $\{\gamma_{\textbf{a}} |　\textbf{a}\in \Z_p^{k-1}　\}$ is also a basis of $\mathbb{Q}(\zeta_p, \zeta_{p^k})$
over $\mathbb{Q}(\zeta_p)$. Similarly, $\{\gamma_{\textbf{a}} |　\textbf{a}\in \Z_p^{k-1}　\}$ is also a basis of $\mathbb{Q}(\zeta_p, \sqrt{-1}, \zeta_{p^k})$
over $\mathbb{Q}(\zeta_p,\sqrt{-1})$.
\end{Remark}

\begin{Lemma}
\label{Lemma2.9}
Let $\textbf{a} \in \Z_p^{k-1}$ and $\gamma_{\textbf{a}}=\sum_{\textbf{v}\in \Z_{p}^{k-1}} \zeta_p^{- \textbf{a}\cdot \textbf{v}} \zeta_{p^k}^{\sum_{i=1}^{k-1}v_ip^{k-1-i}}$.
Then
\[\gamma_{\textbf{a}}= \prod \limits_{i=1}^k\left( \sum \limits_{l \in \Z_{p}} \zeta_p^{la_i} \zeta_{p^{1+i}}^{p-l}\right)\]
\end{Lemma}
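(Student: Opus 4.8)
The plan is to unfold the definition of $\gamma_{\mathbf a}$, split the sum over $\mathbf v=(v_1,\dots,v_{k-1})\in\Z_p^{k-1}$ into a product of $k-1$ one–variable sums — one for each coordinate — and then bring each one–variable factor into the shape appearing on the right–hand side by a single change of summation variable.

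First I would rewrite the inner exponent coordinate by coordinate. For each $i$ one has
\[
\zeta_{p^k}^{v_ip^{k-1-i}}=e^{2\pi\sqrt{-1}\,v_i/p^{i+1}}=\zeta_{p^{i+1}}^{v_i},
\]
and since also $\zeta_p^{-\mathbf a\cdot\mathbf v}=\prod_i\zeta_p^{-a_iv_i}$, the summand of $\gamma_{\mathbf a}$ is the product $\prod_i\bigl(\zeta_p^{-a_iv_i}\zeta_{p^{i+1}}^{v_i}\bigr)$, in which the variables $v_1,\dots,v_{k-1}$ range independently over $\Z_p$. Interchanging the sum and the product then gives
\[
\gamma_{\mathbf a}=\prod_{i=1}^{k-1}\Bigl(\sum_{v\in\Z_p}\zeta_p^{-a_iv}\,\zeta_{p^{i+1}}^{v}\Bigr).
\]

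Next I would transform the $i$–th factor. Reindexing the sum over $v\in\Z_p$ by $l$ with $v\equiv-l\pmod p$, the identity $\zeta_p^{-a_iv}=\zeta_p^{a_il}$ is immediate, while the exponent of $\zeta_{p^{i+1}}$ becomes $p-l$ once the representative of $-l$ in $\{0,1,\dots,p-1\}$ is written in that form. This yields
\[
\sum_{v\in\Z_p}\zeta_p^{-a_iv}\,\zeta_{p^{i+1}}^{v}=\sum_{l\in\Z_p}\zeta_p^{la_i}\,\zeta_{p^{1+i}}^{p-l},
\]
and substituting back into the product is exactly the asserted formula.

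The only point that needs care — and the main obstacle — is the change of variable in the last step: $\zeta_{p^{i+1}}$ is a primitive $p^{i+1}$-th (not $p$-th) root of unity, so its exponent is meaningful only modulo $p^{i+1}$; one therefore has to pin down integer representatives of $v$ and $l$ and check that the boundary index ($v=0$, i.e. $l=0$, where $p-l$ must be read as $p\equiv 0$) is handled consistently with the other terms. Everything else — the coordinate splitting, the interchange of $\sum$ and $\prod$, and the computation $\zeta_{p^k}^{p^{k-1-i}}=\zeta_{p^{i+1}}$ — is routine.
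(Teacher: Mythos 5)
Your proof is correct and is essentially the same computation as the paper's: the paper starts from the product, rewrites $\zeta_{p^{1+i}}^{p-l}=\zeta_{p^k}^{(p-l)p^{k-1-i}}$, substitutes $v_i:=p-l$, and expands into the sum over $\mathbf{v}$, which is exactly your argument run in the reverse direction (and both versions tacitly correct the typo $\prod_{i=1}^{k}$ to $\prod_{i=1}^{k-1}$). The one point where you are more careful than the paper is the boundary term $l=0$, where the exponent $p-l=p$ of $\zeta_{p^{1+i}}$ must indeed be read as $0$ (i.e.\ reduced modulo $p$ before being used as an exponent of a $p^{1+i}$-th root of unity) for the two sides to agree term by term --- a subtlety the paper's substitution $v_i:=p-l$ passes over in silence.
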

\begin{proof}
For simplicity, denote
\[A=\prod \limits_{i=1}^k\left( \sum \limits_{l \in \Z_{p}} \zeta_p^{la_i} \zeta_{p^{1+i}}^{p-l}\right).\]
Then, we have
\begin{eqnarray*}
\begin{split}
A&=\prod \limits_{i=1}^k\left( \sum \limits_{l \in \Z_{p}} \zeta_p^{la_i} \zeta_{p^{k}}^{(p-l)p^{k-1-i}}\right)\\
&\overset{v_i:=p-l}{=}\prod \limits_{i=1}^k\left( \sum \limits_{v_i \in \Z_{p}} \zeta_p^{-a_iv_i} \zeta_{p^{k}}^{v_ip^{k-1-i}}\right)\\
&=\sum \limits_{\textbf{v}\in \Z_p^{k-1}} \zeta_{p}^{- \textbf{a}\cdot \textbf{v}}\zeta_{p^k}^{\sum_{i=1}^{k-1}v_ip^{k-1-i}}.
\end{split}
\end{eqnarray*}
This completes the proof.
\end{proof}

\section{Sufficient (and necessary) conditions for gbent functions}
\label{sec3}
In this section, we mainly focus on functions from  $\Z_{p}^n$ to $\Z_q$, where $p$ is an odd prime number and $q$ is a positive integer
divided by $p$. In subsection \ref{sec3.1}, we present the complete characterization of gbent functions in $\mathcal{GB}_n^{p^k}$
in terms of classical $p$-ary bent functions. In other words, we give an efficient and necessary condition for gbent functions
in $\mathcal{GB}_n^{p^k}$. In subsection \ref{sec3.2}, we consider generalized bent functions in $\mathcal{GB}_n^{q}$, where
$q$ is a positive integer divided by $p$ but not the power of $p$. In this case, we give an sufficient condition for
weakly  regular gbent functions in $\mathcal{GB}_n^{q}$.

\subsection{sufficient and necessary conditions for gbent functions in $\mathcal{GB}_n^{p^k}$}
\label{sec3.1}

Let $f: \Z_p^n \rightarrow \Z_{p^k}$ be a generalized Boolean function  defined by $f(x)=\sum_{i=0}^{k-1}f_i(x)p^{k-1-i}$,
where  $f_i \in \mathcal{B}_n^p$. It turns out that the generalized Walsh-Hadamard spectrum
of $f$ can be described in terms of the Walsh-Hadamard spectrum of its components functions $f_i$.

\begin{thm}
\label{thm3.1.1}
Let $f(x)=\sum_{i=0}^{k-1}f_i(x)p^{k-1-i}$, where $f \in \mathcal{GB}_{n}^{p^k}$ and $f_i \in \mathcal{B}_{n}^p$.
Then
\begin{equation}
\label{07}
\mathcal{H}_f(\textbf{u})=\frac{1}{p^{k-1}} \sum \limits_{\textbf{a} \in \Z_p^{k-1}} \mathcal{H}_{f_0+\sum_{i=1}^{k-1}a_if_i}(\textbf{u}) \gamma_{\textbf{a}},
\end{equation}
where $\gamma_{\textbf{a}}=\sum_{\textbf{v}\in \Z_{p}^{k-1}} \zeta_p^{- \textbf{a}\cdot \textbf{v}} \zeta_{p^k}^{\sum_{i=1}^{k-1}v_ip^{k-1-i}}$.
\end{thm}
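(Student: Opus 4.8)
The plan is to work directly from the definition~(\ref{01}) and to peel off the top‑order component $f_0$. Writing
$f(\mathbf{x})=f_0(\mathbf{x})p^{k-1}+\sum_{i=1}^{k-1}f_i(\mathbf{x})p^{k-1-i}$ and using $\zeta_{p^k}^{p^{k-1}}=\zeta_p$, I would first record the factorization
$\zeta_{p^k}^{f(\mathbf{x})}=\zeta_p^{f_0(\mathbf{x})}\,\zeta_{p^k}^{e(\mathbf{x})}$, where $e(\mathbf{x})=\sum_{i=1}^{k-1}f_i(\mathbf{x})p^{k-1-i}$. The point of isolating $f_0$ is that, since each $f_i\in\mathcal{B}_n^p$ takes values in $\{0,1,\dots,p-1\}$, the integer $e(\mathbf{x})$ always lies in $\{0,1,\dots,p^{k-1}-1\}$, so it is exactly of the form to which Lemma~\ref{Lemma2.8} applies, with the vector $\mathbf{u}$ there specialized to $(f_1(\mathbf{x}),\dots,f_{k-1}(\mathbf{x}))\in\Z_p^{k-1}$.

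Next I would substitute the expansion of Lemma~\ref{Lemma2.8}, namely $\zeta_{p^k}^{e(\mathbf{x})}=\frac{1}{p^{k-1}}\sum_{\mathbf{a}\in\Z_p^{k-1}}\zeta_p^{\sum_{i=1}^{k-1}a_if_i(\mathbf{x})}\gamma_{\mathbf{a}}$, into~(\ref{01}) and interchange the two finite sums (over $\mathbf{x}\in\Z_p^n$ and over $\mathbf{a}\in\Z_p^{k-1}$). Collecting the $\mathbf{x}$-dependent factors and using $\zeta_p^{f_0(\mathbf{x})}\zeta_p^{\sum_i a_if_i(\mathbf{x})}=\zeta_p^{\,f_0(\mathbf{x})+\sum_i a_if_i(\mathbf{x})}$ (legitimate because $\zeta_p$ has order $p$, so only the residue mod $p$ of the exponent matters, and that residue is precisely the value of the $p$-ary function $f_0+\sum_{i=1}^{k-1}a_if_i\in\mathcal{B}_n^p$), one obtains
\[
\mathcal{H}_f(\mathbf{u})=\frac{1}{p^{k-1}}\sum_{\mathbf{a}\in\Z_p^{k-1}}\gamma_{\mathbf{a}}\Bigl(p^{-\frac{n}{2}}\sum_{\mathbf{x}\in\Z_p^n}\zeta_p^{-\mathbf{u}\cdot\mathbf{x}}\,\zeta_p^{\,f_0(\mathbf{x})+\sum_{i=1}^{k-1}a_if_i(\mathbf{x})}\Bigr).
\]
The parenthesized expression is just the $q=p$ instance of~(\ref{01}) applied to the $p$-ary function $f_0+\sum_{i=1}^{k-1}a_if_i$, i.e.\ it equals $\mathcal{H}_{f_0+\sum_{i=1}^{k-1}a_if_i}(\mathbf{u})$, and this is exactly~(\ref{07}).

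I do not expect a genuine obstacle: the argument is a bookkeeping computation whose only nontrivial input is Lemma~\ref{Lemma2.8}. The two points that deserve an explicit word are (a) the range check that $e(\mathbf{x})\le p^{k-1}-1$, which is what makes Lemma~\ref{Lemma2.8} applicable, and (b) the observation that because $f_0$ is $\Z_p$-valued, the factor $\zeta_{p^k}^{f_0(\mathbf{x})p^{k-1}}$ collapses to $\zeta_p^{f_0(\mathbf{x})}$, so that after the substitution everything in sight lives over $\Q(\zeta_p)$ except for the basis elements $\gamma_{\mathbf{a}}$. Viewed through Remark~\ref{Remark2.2}, this identifies the right-hand side of~(\ref{07}) as the coordinate expansion of $\mathcal{H}_f(\mathbf{u})$ in the $\Q(\zeta_p)$-basis $\{\gamma_{\mathbf{a}}\mid\mathbf{a}\in\Z_p^{k-1}\}$, which is precisely the form needed for the characterization of gbent functions in $\mathcal{GB}_n^{p^k}$ that follows.
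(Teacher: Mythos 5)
Your argument is correct, and it reaches (\ref{07}) by a somewhat different route than the paper. The paper's proof never invokes Lemma \ref{Lemma2.8} at all: it writes $\zeta_{p^k}^{f(\mathbf{x})}=\prod_{i=0}^{k-1}\zeta_{p^{1+i}}^{f_i(\mathbf{x})}$, expands each factor with $i\geq 1$ separately by the Hadamard-matrix identity of Lemma \ref{Lemma2.6}, multiplies the $k-1$ expansions back together, and only at the very end uses Lemma \ref{Lemma2.9} to recognize the resulting product $\prod_{i=1}^{k-1}\bigl(\sum_{l\in\Z_p}\zeta_p^{la_i}\zeta_{p^{1+i}}^{p-l}\bigr)$ as $\gamma_{\mathbf{a}}$. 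You instead peel off $f_0$ and apply Lemma \ref{Lemma2.8} once to the entire tail $e(\mathbf{x})=\sum_{i=1}^{k-1}f_i(\mathbf{x})p^{k-1-i}$, with the vector $\mathbf{u}$ of that lemma specialized to $(f_1(\mathbf{x}),\dots,f_{k-1}(\mathbf{x}))$; since Lemma \ref{Lemma2.8} is essentially the consolidated form of what the paper re-derives inline, the net computation is the same identity, but your version is shorter and avoids the factor-by-factor bookkeeping (and the index slips that crept into the paper's display, e.g.\ sums over $\mathbf{a}\in\Z_p^n$ that should be over $\Z_p^{k-1}$). Your two explicit checks --- that $e(\mathbf{x})\leq\sum_{i=1}^{k-1}(p-1)p^{k-1-i}=p^{k-1}-1$ so Lemma \ref{Lemma2.8} applies, and that $\zeta_{p^k}^{f_0(\mathbf{x})p^{k-1}}=\zeta_p^{f_0(\mathbf{x})}$ so the exponent $f_0+\sum_i a_if_i$ may be read modulo $p$ --- are exactly the points that need saying, and the closing remark identifying (\ref{07}) as the coordinate expansion in the basis $\{\gamma_{\mathbf{a}}\}$ of Remark \ref{Remark2.2} is the right way to see why the theorem is useful downstream. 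The one thing to keep in mind is that Theorem \ref{thm3.2.1} reuses this computation with $q$ not a power of $p$, where the single-step appeal to Lemma \ref{Lemma2.8} must be replaced by the analogous expansion with $\widetilde{\gamma_{\mathbf{a}}}$; the paper's factor-wise derivation transfers to that setting a little more transparently.
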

\begin{proof}
According to the definition of $\mathcal{H}_f({\textbf{u}})$, we have
\begin{eqnarray*}
p^{\frac{n}{2}}\mathcal{H}_f({\textbf{u}})&=&\sum \limits_{\textbf{x}\in \Z_{p}^{n}} \zeta_p^{- \textbf{u}\cdot \textbf{x}} \zeta_{p^k}^{f(x)}\\
&=&\sum \limits_{\textbf{x}\in \Z_{p}^{n}} \zeta_{p^k}^{\sum_{i=0}^{k-1}f_i(x)p^{k-1-i}} \zeta_p^{- \textbf{u}\cdot \textbf{x}}\\
&=&\sum \limits_{\textbf{x}\in \Z_{p}^{n}} \zeta_{p^{1+i}}^{\sum_{i=0}^{k-1}f_i(x)} \zeta_p^{- \textbf{u}\cdot \textbf{x}}\\
&=&\sum \limits_{\textbf{x}\in \Z_{p}^{n}} \zeta_p^{- \textbf{u}\cdot \textbf{x}} \prod \limits_{i=0}^{k-1} \zeta_{p^{1+i}}^{f_i(x)}\\
&=&\sum \limits_{\textbf{x}\in \Z_{p}^{n}} \zeta_p^{- \textbf{u}\cdot \textbf{x} + f_0(x)} \prod \limits_{i=1}^{k-1} \zeta_{p^{1+i}}^{f_i(x)}\\
&=&\frac{1}{p^{k-1}}\sum \limits_{\textbf{x}\in \Z_{p}^{n}} \zeta_p^{- \textbf{u}\cdot \textbf{x} + f_0(x)} \prod \limits_{i=1}^{k-1}
\left(\sum \limits_{j \in \Z_p} \Big(\sum \limits_{a_i \in \Z_P} \zeta_p^{(f_i(x)-j)a_i} \Big) \zeta_{p^{1+i}}^j\right) \\
&\overset{j:=p-l}{=}&\frac{1}{p^{k-1}}\sum \limits_{\textbf{x}\in \Z_{p}^{n}} \zeta_p^{- \textbf{u}\cdot \textbf{x} + f_0(x)} \prod \limits_{i=1}^{k-1}
\left(\sum \limits_{a_i \in \Z_p} \Big(\sum \limits_{l\in \Z_P} \zeta_p^{la_i} \zeta_{p^{1+i}}^{p-l} \Big) \zeta_{p}^{a_if_i(x)}\right) \\
&=&\frac{1}{p^{k-1}}\sum \limits_{\textbf{x}\in \Z_{p}^{n}} \zeta_p^{- \textbf{u}\cdot \textbf{x} + f_0(x)}
\left(\sum \limits_{\textbf{a} \in \Z_p} \zeta_{p}^{\sum_{i=1}^{k-1}a_if_i(x)} \prod \limits_{i=1}^{k-1}  \Big(\sum \limits_{l\in \Z_P} \zeta_p^{la_i} \zeta_{p^{1+i}}^{p-l} \Big) \right)\\
&=&\frac{1}{p^{k-1}}\sum \limits_{\textbf{a} \in \Z_p^{n}}  \prod \limits_{i=1}^{k-1}  \Big(\sum \limits_{l\in \Z_P} \zeta_p^{la_i} \zeta_{p^{1+i}}^{p-l} \Big)
\sum \limits_{\textbf{x}\in \Z_{p}^{k-1}} \zeta_p^{- \textbf{u}\cdot \textbf{x} + f_0(x)+\sum_{i=1}^{k-1}a_if_i(x)} \\
&=&\frac{1}{p^{k-1}}  p^{\frac{n}{2}}  \sum \limits_{\textbf{a} \in \Z_p^{n}}   \mathcal{H}_{ f_0(x)+\sum_{i=1}^{k-1}a_if_i(x)}(\textbf{u})
\prod \limits_{i=1}^{k-1}  \Big(\sum \limits_{l\in \Z_P} \zeta_p^{la_i} \zeta_{p^{1+i}}^{p-l} \Big)\\
&=&\frac{1}{p^{k-1}}  p^{\frac{n}{2}}  \sum \limits_{\textbf{a} \in \Z_p^{n}}   \mathcal{H}_{ f_0(x)+\sum_{i=1}^{k-1}a_if_i(x)}(\textbf{u})\gamma_{\textbf{a}}.
\end{eqnarray*}
The last equality holds from Lemma \ref{Lemma2.9}. It is easily to get that
\begin{equation*}
\mathcal{H}_f(\textbf{u})=\frac{1}{p^{k-1}} \sum \limits_{\textbf{a} \in \Z_p^{k-1}} \mathcal{H}_{f_0+\sum_{i=1}^{k-1}a_if_i}(\textbf{u}) \gamma_{\textbf{a}}.
\end{equation*}
This completes the proof.
\end{proof}

\begin{thm}
\label{thm3.1.2}
Let $k \geq 2$ and $f(x)=\sum_{i=0}^{k-1}f_i(x)p^{k-1-i}$, where $f \in \mathcal{GB}_{n}^{p^k}$ and $f_i \in \mathcal{B}_{n}^p$.
Then $f$ is gbent if and only if for any $\textbf{u} \in \Z_p^{n}$ and $\textbf{a}  \in \Z_p^{k-1}$, there exists some $\textbf{v} \in \Z_p^{k-1}$
and $j\in \Z_p$ such that
\begin{eqnarray*}
\begin{split}
\mathcal{H}_{f_0+\sum_{i=1}^{k-1}a_if_i}(\textbf{u})=\left\{
\begin{array}{l}
\pm \zeta_p^{\textbf{v}\cdot \textbf{a}+j}  ~~~~~~~~~~~~$if$~$n$~is~even~~or~~$n$~is ~odd~and~p \equiv 1(mod~4),\\
\pm  \sqrt{-1} \zeta_p^{\textbf{v}\cdot \textbf{a}+j} ~~~~ $if$~ $n$~is ~odd~and~p \equiv 3(mod~4).
\end{array}
\right.
\end{split}
\end{eqnarray*}
where $\mathbf{a}=(a_1,a_2,\cdots,a_{k-1}).$
\end{thm}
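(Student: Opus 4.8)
The plan is to read Theorem~\ref{thm3.1.2} off the exact expansion in Theorem~\ref{thm3.1.1} together with the rigid shape of gbent Walsh coefficients in Lemma~\ref{Lemma2.5}, using crucially that $\{\gamma_{\mathbf a}\mid \mathbf a\in\Z_p^{k-1}\}$ is a basis of $\Q(\zeta_p,\zeta_{p^k})$ over $\Q(\zeta_p)$, and of $\Q(\zeta_p,\sqrt{-1},\zeta_{p^k})$ over $\Q(\zeta_p,\sqrt{-1})$, as recorded in Remark~\ref{Remark2.2}. Throughout I write $g_{\mathbf a}:=f_0+\sum_{i=1}^{k-1}a_if_i\in\mathcal B_n^p$, so that Theorem~\ref{thm3.1.1} becomes $\mathcal H_f(\mathbf u)=\frac{1}{p^{k-1}}\sum_{\mathbf a\in\Z_p^{k-1}}\mathcal H_{g_{\mathbf a}}(\mathbf u)\gamma_{\mathbf a}$, and I fix one $\mathbf u$ at a time.

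For the ``if'' part, suppose $\mathcal H_{g_{\mathbf a}}(\mathbf u)=\epsilon\,\zeta_p^{\mathbf v\cdot\mathbf a+j}$ for all $\mathbf a$, with $\epsilon\in\{\pm1\}$ (resp. $\{\pm\sqrt{-1}\}$) and $\mathbf v\in\Z_p^{k-1}$, $j\in\Z_p$ fixed once $\mathbf u$ is fixed. Substituting into the formula of Theorem~\ref{thm3.1.1} and pulling the constant $\epsilon\,\zeta_p^{j}$ out of the sum, I would recognize $\frac{1}{p^{k-1}}\sum_{\mathbf a}\zeta_p^{\mathbf v\cdot\mathbf a}\gamma_{\mathbf a}$ as $\zeta_{p^k}^{e}$, $e=\sum_{i=1}^{k-1}v_ip^{k-1-i}$, by Lemma~\ref{Lemma2.8} (used with the vector $\mathbf v$ in place of its $\mathbf u$). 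Hence $\mathcal H_f(\mathbf u)=\epsilon\,\zeta_p^{j}\,\zeta_{p^k}^{e}$, which has absolute value $|\epsilon|=1$; as $\mathbf u$ was arbitrary, $f$ is gbent (and this simultaneously exhibits $\mathcal H_f$ in the form of Lemma~\ref{Lemma2.5}).

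For the ``only if'' part, assume $f$ is gbent. By Lemma~\ref{Lemma2.5} there is $f^\ast:\Z_p^n\to\Z_{p^k}$ with $\mathcal H_f(\mathbf u)=\epsilon\,\zeta_{p^k}^{f^\ast(\mathbf u)}$, where $\epsilon\in\{\pm1\}$ (resp. $\{\pm\sqrt{-1}\}$) may depend on $\mathbf u$. Writing the base-$p$ expansion $f^\ast(\mathbf u)=\sum_{i=0}^{k-1}f_i^\ast(\mathbf u)p^{k-1-i}$ with $f_i^\ast\in\mathcal B_n^p$, I would split $\zeta_{p^k}^{f^\ast(\mathbf u)}=\zeta_p^{f_0^\ast(\mathbf u)}\,\zeta_{p^k}^{e(\mathbf u)}$ with $e(\mathbf u)=\sum_{i=1}^{k-1}f_i^\ast(\mathbf u)p^{k-1-i}$, and expand $\zeta_{p^k}^{e(\mathbf u)}$ by Lemma~\ref{Lemma2.8} with $\mathbf v:=(f_1^\ast(\mathbf u),\dots,f_{k-1}^\ast(\mathbf u))$. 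This yields a second representation $\mathcal H_f(\mathbf u)=\frac{1}{p^{k-1}}\sum_{\mathbf a}\epsilon\,\zeta_p^{\,f_0^\ast(\mathbf u)+\mathbf v\cdot\mathbf a}\gamma_{\mathbf a}$ in the basis $\{\gamma_{\mathbf a}\}$. Comparing it with the representation coming from Theorem~\ref{thm3.1.1} and invoking uniqueness of coordinates in that basis (Remark~\ref{Remark2.2}) forces $\mathcal H_{g_{\mathbf a}}(\mathbf u)=\epsilon\,\zeta_p^{\mathbf v\cdot\mathbf a+j}$ with $j=f_0^\ast(\mathbf u)$, exactly the asserted form.

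The step requiring the most care --- and the main obstacle --- is legitimizing this comparison of coordinates, which demands knowing that every scalar in sight lies in the correct field. One has to check that each $\mathcal H_{g_{\mathbf a}}(\mathbf u)$ and each $\epsilon\,\zeta_p^{(\cdot)}$ lies in the ground field $\Q(\zeta_p)$ when $n$ is even or ($n$ odd and $p\equiv1\pmod 4$), and in $\Q(\zeta_p,\sqrt{-1})$ when $n$ is odd and $p\equiv3\pmod 4$, and that $\mathcal H_f(\mathbf u)$ correspondingly lies in $\Q(\zeta_{p^k})$ resp. $\Q(\zeta_{p^k},\sqrt{-1})$. This is precisely where the normalization $p^{-n/2}$ must be controlled through Lemma~\ref{Lemma2.3} (so that $\sqrt p\in\Q(\zeta_p)$ in the ``split'' cases and $\sqrt p\in\Q(\zeta_{4p})=\Q(\zeta_p,\sqrt{-1})$ otherwise), and it is what forces the three-way split in the statement. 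Once the ambient fields are pinned down, the basis property from Remark~\ref{Remark2.2} does the rest, and the leftover manipulations --- base-$p$ digit bookkeeping and the index substitutions already used in Lemmas~\ref{Lemma2.8}--\ref{Lemma2.9} --- are routine.
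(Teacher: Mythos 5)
Your proposal is correct and follows essentially the same route as the paper: the ``only if'' direction uses Lemma \ref{Lemma2.5} to pin down $\mathcal H_f(\mathbf u)=\epsilon\,\zeta_p^{j}\zeta_{p^k}^{e}$, re-expands $\zeta_{p^k}^{e}$ via Lemma \ref{Lemma2.8}, and compares coordinates against the expansion of Theorem \ref{thm3.1.1} in the basis $\{\gamma_{\mathbf a}\}$ of Remark \ref{Remark2.2}, with Lemma \ref{Lemma2.3} controlling which ground field ($\Q(\zeta_p)$ or $\Q(\zeta_p,\sqrt{-1})$) the coefficients live in. Your explicit write-up of the ``if'' direction (and the observation that $\epsilon$, $\mathbf v$, $j$ must be independent of $\mathbf a$) is a small completeness improvement over the paper, which leaves that direction implicit, but it is not a different method.
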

\begin{proof}
If $n$ is even or $n$ is odd and $p\equiv 1~(mod~4)$, then $\mathcal{H}_f{(\textbf{u})}= \pm \zeta_{p^k}^i$, for some $0 \leq i \leq p^k-1$.
Hence, $\mathcal{H}_f{(\textbf{u})}$ can be expressed as $\mathcal{H}_f{(\textbf{u})}= \pm \zeta_p^j \zeta_{p^k}^{i-jp^{k-1}}$,
where $0 \leq j \leq p-1$ and $0 \leq i-jp^{k-1}  \leq p^{k-1}-1$.
According to Theorem \ref{thm3.1.1} and Lemma \ref{Lemma2.8}, we have
\begin{eqnarray*}
\mathcal{H}_f{(\textbf{u})}&=&\frac{1}{p^{k-1}} \sum \limits_{\textbf{a} \in \Z_p^{k-1}} \mathcal{H}_{f_0+\sum_{i=1}^{k-1}a_if_i}(\textbf{u}) \gamma_{\textbf{a}}\\
&=&\pm \zeta_p^j \frac{1}{p^{k-1}} \sum \limits_{\textbf{a}\in \Z_p^{k-1}} \zeta_{p}^{\textbf{a}\cdot \textbf{v}} \gamma_{\textbf{a}}
\end{eqnarray*}
By the definition of Walsh-Hadamard transform and condition (i) of Lemma \ref{Lemma2.3}, we have
\[\mathcal{H}_{f_0+\sum_{i=1}^{k-1}a_if_i}(\textbf{u}) = p^{-\frac{n}{2}}\sum_{\mathbf{x} \in \Z_p^n} \zeta_p^{f_0(\mathbf{x})+\sum_{i=1}^{k-1}a_if_i(\mathbf{x})}
\in \Q(\zeta_p),\]
when $n$ is even or $n$ is odd and $p\equiv 1~(mod~4)$.
Then from Remark \ref{Remark2.2}, we can get
\[\mathcal{H}_{f_0+\sum_{i=1}^{k-1}a_if_i}(\textbf{u})=\pm \zeta_p^{\textbf{v}\cdot \textbf{a}+j},\]
where $\mathbf{v}$ and $j$ only depend $f$ and $\mathbf{u}$.

If $n$ is odd and $p\equiv 3~(mod~4)$, By the definition of Walsh-Hadamard transform and condition (ii) of Lemma \ref{Lemma2.3}, we have
\[\mathcal{H}_{f_0+\sum_{i=1}^{k-1}a_if_i}(\textbf{u}) = p^{-\frac{n}{2}}\sum_{\mathbf{x} \in \Z_p^n} \zeta_p^{f_0(\mathbf{x})+\sum_{i=1}^{k-1}a_if_i(\mathbf{x})}
\in \Q(\zeta_{4p}) \setminus \Q(\zeta_{2p})  \subseteq \Q(\zeta_{p},\sqrt{-1}) .\]
Similarly as above, from Lemma \ref{Lemma2.5} and Remark  \ref{Remark2.2}, we can get
\[\mathcal{H}_{f_0+\sum_{i=1}^{k-1}a_if_i}(\textbf{u})=\pm \sqrt{-1} \zeta_p^{\textbf{v}\cdot \textbf{a}+j}.\]
This completes the proof.
\end{proof}

\begin{Remark}
\label{Remark3.1.1}
Let $k\geq 2$ and $f(x)=\sum_{i=0}^{k-1}f_i(x)p^{k-1-i}$ be a gbent function, where $f \in \mathcal{GB}_{n}^{p^k}$ and $f_i \in \mathcal{B}_{n}^p$.
Then, from Theorem \ref{thm3.1.2}, $f_0+\sum_{i=1}^{k-1}a_if_i$ is $p$-ary bent for all $\mathbf{a} \in \Z_{p}^{k-1}$.
\end{Remark}

The following results are straightforward consequences of Theorem \ref{thm3.1.2}.

\begin{Corollary}
\label{Corollary3.1.1}
Let $k\geq 2$ and $f(x)=\sum_{i=0}^{k-1}f_i(x)p^{k-1-i}$ be a gbent function, where $f \in \mathcal{GB}_{n}^{p^k}$ and $f_i \in \mathcal{B}_{n}^p$.
Then, $g_{\pi}$ is always gbent, where $g_{\pi}$ is defined as $g_{\pi}(\mathbf{x})=g_0(\mathbf{x})p^{k-1}  + \sum_{i=1}^{k-1}g_{\pi(i)}(\mathbf{x})p^{k-1-i} $
for any permutation $\pi$ of $\{1,2,\cdots,k-1\}$.
\end{Corollary}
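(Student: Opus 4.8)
The plan is to derive Corollary \ref{Corollary3.1.1} directly from the characterization in Theorem \ref{thm3.1.2}, by checking that the permuted function $g_\pi$ satisfies exactly the same Walsh-spectrum condition that characterizes gbent-ness. First I would record the component decomposition of $g_\pi$: writing $g_\pi(\mathbf{x}) = \sum_{i=0}^{k-1} g_i^{(\pi)}(\mathbf{x}) p^{k-1-i}$, we have $g_0^{(\pi)} = g_0$ and $g_i^{(\pi)} = g_{\pi(i)}$ for $1 \le i \le k-1$. Since $f$ is gbent, Theorem \ref{thm3.1.2} tells us that for every $\mathbf{u} \in \Z_p^n$ and every $\mathbf{a} \in \Z_p^{k-1}$ the character sum $\mathcal{H}_{f_0 + \sum_{i=1}^{k-1} a_i f_i}(\mathbf{u})$ equals $\pm \zeta_p^{\mathbf{v}\cdot\mathbf{a}+j}$ (or $\pm\sqrt{-1}\,\zeta_p^{\mathbf{v}\cdot\mathbf{a}+j}$ in the odd/$p\equiv 3$ case), where $\mathbf{v}=\mathbf{v}(f,\mathbf{u})$ and $j=j(f,\mathbf{u})$.

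The key step is a change of summation index. Replacing the $f_i$ by $g_{\pi(i)}$, I would look at $\mathcal{H}_{g_0 + \sum_{i=1}^{k-1} a_i g_{\pi(i)}}(\mathbf{u})$ and set $b_{\pi(i)} := a_i$, i.e. $\mathbf{b} = \mathbf{a}\circ\pi^{-1}$ in coordinates. Then $g_0 + \sum_{i=1}^{k-1} a_i g_{\pi(i)} = g_0 + \sum_{\ell=1}^{k-1} b_\ell g_\ell$, so
\begin{equation*}
\mathcal{H}_{g_0 + \sum_{i=1}^{k-1} a_i g_{\pi(i)}}(\mathbf{u}) = \mathcal{H}_{g_0 + \sum_{\ell=1}^{k-1} b_\ell g_\ell}(\mathbf{u}).
\end{equation*}
Because $f$ (with components $f_i = g_i$, up to renaming — here I am simply taking $f = g_\pi$ built from the same pool of bent components, using that $g$ itself is gbent so Remark \ref{Remark3.1.1} applies) is gbent, the right-hand side equals $\pm \zeta_p^{\mathbf{w}\cdot\mathbf{b}+j}$ for suitable $\mathbf{w}, j$ depending only on $g$ and $\mathbf{u}$. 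Now $\mathbf{w}\cdot\mathbf{b} = \sum_\ell w_\ell b_\ell = \sum_i w_{\pi(i)} a_i = (\mathbf{w}\circ\pi)\cdot \mathbf{a}$, so setting $\mathbf{v} := \mathbf{w}\circ\pi$ we obtain $\mathcal{H}_{g_0 + \sum_{i=1}^{k-1} a_i g_{\pi(i)}}(\mathbf{u}) = \pm \zeta_p^{\mathbf{v}\cdot\mathbf{a}+j}$, with $\mathbf{v}$ and $j$ depending only on $g_\pi$ (equivalently on $g$ and $\pi$) and $\mathbf{u}$. The $p\equiv 3\pmod 4$ case is identical with an extra factor $\sqrt{-1}$. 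This is precisely the condition of Theorem \ref{thm3.1.2}, so $g_\pi$ is gbent.

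The only mild subtlety — and the step I would be most careful about — is that Theorem \ref{thm3.1.2}'s condition requires $\mathbf{v}$ and $j$ to be \emph{independent of} $\mathbf{a}$; I must verify that after the substitution $\mathbf{b}=\mathbf{a}\circ\pi^{-1}$ the vector $\mathbf{v}=\mathbf{w}\circ\pi$ is still a fixed vector not depending on $\mathbf{a}$, which it is since $\mathbf{w}$ comes from the gbent-ness of $g$ and does not depend on $\mathbf{b}$. One should also note that the substitution $\mathbf{a}\mapsto \mathbf{b}=\mathbf{a}\circ\pi^{-1}$ is a bijection of $\Z_p^{k-1}$, so ranging over all $\mathbf{a}$ is the same as ranging over all $\mathbf{b}$, which is what lets us transfer the "for all $\mathbf{a}$" quantifier. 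Everything else is bookkeeping: the sign $\pm$ (and possible $\sqrt{-1}$) is inherited unchanged, and the case split on the parity of $n$ and $p \bmod 4$ is untouched by the permutation. Hence $g_\pi$ satisfies the characterization and is gbent, completing the proof.
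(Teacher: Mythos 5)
Your proposal is correct and is exactly the ``straightforward consequence of Theorem \ref{thm3.1.2}'' that the paper invokes without writing out: the linear combinations $g_0+\sum_i a_i g_{\pi(i)}$ are just a reindexing of the combinations $f_0+\sum_\ell b_\ell f_\ell$ via the bijection $b_{\pi(i)}=a_i$, so the spectral condition transfers with $\mathbf{v}$ replaced by $\mathbf{v}\circ\pi$ and $j$, the sign, and the possible factor $\sqrt{-1}$ unchanged. You are also right to flag, and correctly resolve, the one real subtlety — that $\mathbf{v}$ and $j$ must remain independent of $\mathbf{a}$ after the substitution.
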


\begin{Corollary}
\label{Corollary3.1.2}
Let $k\geq 2$, $l \leq k$, and $f(x)=\sum_{i=0}^{k-1}f_i(x)p^{k-1-i}$ be a gbent function, where $f \in \mathcal{GB}_{n}^{p^k}$ and $f_i \in \mathcal{B}_{n}^p$.
Then, $g_{I}$ is always gbent in $\mathcal{GB}_n^{p^l}$, where $g_{I}$ is defined as $g_{I}(\mathbf{x})=g_0(\mathbf{x})p^{l-1}  + \sum_{j=1}^{l-1}g_{i_j}(\mathbf{x})p^{k-1-i} $
for any subset $I=\{i_1,\cdots,i_{l-1}\}$ of $\{1,2,\cdots,k-1\}$, where $\#I=l-1$.
\end{Corollary}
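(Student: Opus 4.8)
The plan is to read the conclusion off Theorem \ref{thm3.1.2}. Write the component functions of $g_I$ as $f_0, f_{i_1}, \dots, f_{i_{l-1}}$ (identifying the $g_i$ in the statement with the $f_i$), so that, after re-indexing the exponents, $g_I = f_0\,p^{l-1} + \sum_{j=1}^{l-1} f_{i_j}\,p^{l-1-j} \in \mathcal{GB}_n^{p^l}$. If $l=1$ the claim is immediate: $g_I = f_0$, which is $p$-ary bent by Remark \ref{Remark3.1.1} (take $\mathbf{a}=\mathbf{0}$), i.e. gbent in $\mathcal{GB}_n^{p}$. So assume $l\geq 2$ and apply Theorem \ref{thm3.1.2} to $g_I$ and its $l-1$ component functions.

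The key observation is that every linear combination of the component functions of $g_I$ is already a linear combination of the component functions of $f$: for $\mathbf{b}=(b_1,\dots,b_{l-1})\in\Z_p^{l-1}$ one has $f_0 + \sum_{j=1}^{l-1} b_j f_{i_j} = f_0 + \sum_{i=1}^{k-1} a_i f_i$, where $\mathbf{a}=\mathbf{a}(\mathbf{b})\in\Z_p^{k-1}$ is the ``zero-padding'' of $\mathbf{b}$ defined by $a_{i_j}=b_j$ for $1\leq j\leq l-1$ and $a_i=0$ for $i\notin I$. Since $f$ is gbent, Theorem \ref{thm3.1.2} provides, for each fixed $\mathbf{u}\in\Z_p^n$, a vector $\mathbf{v}=\mathbf{v}(\mathbf{u})\in\Z_p^{k-1}$ and $j=j(\mathbf{u})\in\Z_p$, \emph{independent of} $\mathbf{a}$, such that $\mathcal{H}_{f_0+\sum_i a_i f_i}(\mathbf{u})$ equals $\pm\zeta_p^{\mathbf{v}\cdot\mathbf{a}+j}$ when $n$ is even or $n$ is odd with $p\equiv 1\pmod 4$, and $\pm\sqrt{-1}\,\zeta_p^{\mathbf{v}\cdot\mathbf{a}+j}$ when $n$ is odd with $p\equiv 3\pmod 4$.

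Now substitute $\mathbf{a}=\mathbf{a}(\mathbf{b})$. Because $\mathbf{a}$ is supported on $I$, $\mathbf{v}\cdot\mathbf{a}=\sum_{j=1}^{l-1} v_{i_j} b_j = \mathbf{v}_I\cdot\mathbf{b}$, where $\mathbf{v}_I:=(v_{i_1},\dots,v_{i_{l-1}})\in\Z_p^{l-1}$ is the restriction of $\mathbf{v}(\mathbf{u})$ to the coordinates in $I$. Hence $\mathcal{H}_{f_0+\sum_j b_j f_{i_j}}(\mathbf{u})$ equals $\pm\zeta_p^{\mathbf{v}_I\cdot\mathbf{b}+j}$ (resp. $\pm\sqrt{-1}\,\zeta_p^{\mathbf{v}_I\cdot\mathbf{b}+j}$) in the respective cases, with $\mathbf{v}_I$ and $j$ depending only on $\mathbf{u}$. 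This is exactly the condition required by Theorem \ref{thm3.1.2} for $g_I\in\mathcal{GB}_n^{p^l}$ (the case distinction on $n$ and $p\bmod 4$ being inherited verbatim, as $n$ and $p$ are unchanged), so $g_I$ is gbent.

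I expect the only delicate points to be bookkeeping: using that the pair $(\mathbf{v}(\mathbf{u}),j(\mathbf{u}))$ supplied by Theorem \ref{thm3.1.2} does not depend on $\mathbf{a}$ (so that the single restricted pair $(\mathbf{v}_I,j)$ serves all $\mathbf{b}$ simultaneously), and correctly re-indexing exponents when passing from $\mathcal{GB}_n^{p^k}$ down to $\mathcal{GB}_n^{p^l}$. Once the zero-padding map $\mathbf{b}\mapsto\mathbf{a}(\mathbf{b})$ is in place, no computation is needed; Corollary \ref{Corollary3.1.1} is the analogous statement for $l=k$ and can be proved the same way (composing with a permutation of $I$).
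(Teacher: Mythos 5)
Your proof is correct and fills in exactly the argument the paper intends: the paper offers no proof beyond calling the corollary a ``straightforward consequence of Theorem \ref{thm3.1.2}'', and your zero-padding of $\mathbf{b}$ to $\mathbf{a}$ together with restricting $\mathbf{v}(\mathbf{u})$ to the coordinates in $I$ is the natural way to read it off both directions of that theorem. The one point worth making explicit is that the sign $\pm$ (and the factor $\sqrt{-1}$), like $\mathbf{v}$ and $j$, is uniform in $\mathbf{a}$ for fixed $\mathbf{u}$ --- this is what the paper's proof of Theorem \ref{thm3.1.2} actually delivers and what the converse direction needs --- but you use it consistently, so there is no gap.
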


\subsection{sufficient conditions for regular gbent functions in $\mathcal{GB}_n^{q}$}
\label{sec3.2}
In this subsection, we consider generalized Boolean functions in $\mathcal{GB}_n^{q}$, where
$q$ is a positive integer divided by $p$ but not the power of $p$. In this case,  an efficient condition for
weakly regular gbent function was given, which in terms of the component functions of $f$.

Let $f:\Z_p^n \rightarrow \Z_q$, $p^{k-1} < q < p^{k}$  and  $p|q$,  be given, commonly, as $f(x)=\sum_{i=0}^{k-1}f_i(x)p^{k-1-i}$.
However, for the reasons explained below, we rewrite the function $f(x)$ as
\begin{equation}
\label{08}
f(x)=\frac{q}{p}f_0(x)+\sum \limits_{i=1}^{k-1}f_i(x)p^{k-1-i},
\end{equation}
where $f_i \in \mathcal{B}_n^p$. The importance of the term $\frac{q}{p}f_0(x)$ is due to the fact that $\frac{q}{p}$ is
the only coefficient from $\Z_q$ for which it holds that $\zeta_q^{\frac{q}{p}f_0(x)} = \zeta_p^{f_0(x)}$. This coefficients, which naturally
appears when $q=p^k$  as the coefficient of $f_0(x)$ as discussed in subsection \ref{sec3.1}, actually made it possible to express the spectral values
of the generalized Walsh-Hadamard transform of $f$ in terms of certain linear combinations of $\mathcal{H}_{f_0+\sum_{i=1}^{k-1}a_if_i}(\textbf{u})$
as given by (\ref{07}). Then, we give the main result of this subsection.

\begin{thm}
\label{thm3.2.1}
Let $f:\Z_p^n \rightarrow \Z_q$, $p^{k-1} < q < p^{k}$ and  $p|q$, be given as $f(x)=\frac{q}{p}f_0(x)+\sum_{i=1}^{k-1}f_i(x)p^{k-1-i}$,  where $f_i \in \mathcal{B}_n^p$.
For any $\textbf{u} \in \Z_p^{n}$ and $ \textbf{a} \in \Z_p^{k-1}$, if there exists some $\textbf{v} \in \Z_p^{k-1}$ and $j\in \Z_p$ such that
\begin{eqnarray*}
\mathcal{H}_{f_0+\sum_{i=1}^{k-1}a_if_i}(\textbf{u})= \alpha \zeta_p^{\textbf{v}\cdot \textbf{a}+j},
\end{eqnarray*}
where $\mathbf{a}=(a_1,a_2,\cdots,a_{k-1})$ and $\alpha$ is a complex number with unit magnitude, then $f$ is a weakly regular gbent function.
\end{thm}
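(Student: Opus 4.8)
The plan is to run the argument behind Theorems \ref{thm3.1.1} and \ref{thm3.1.2} almost verbatim, but carrying the $q$-th roots of unity $\eta_i:=\zeta_q^{p^{k-1-i}}$ in place of the prime-power roots $\zeta_{p^{1+i}}$ (the two agree exactly when $q=p^k$). First I would establish the analogue of \eqref{07}. Since $p\mid q$, the coefficient $\frac{q}{p}$ satisfies $\zeta_q^{\frac{q}{p}f_0(x)}=\zeta_p^{f_0(x)}$, hence $\zeta_q^{f(x)}=\zeta_p^{f_0(x)}\prod_{i=1}^{k-1}\eta_i^{f_i(x)}$. Because each $f_i(x)\in\Z_p$, I would apply the identity \eqref{06} (which holds for any $z\in\C$) with $z=\eta_i$ and $a=f_i(x)$ to each factor, expand the product, and interchange the order of summation exactly as in the proof of Theorem \ref{thm3.1.1}; the inner sum over $\mathbf{x}$ is then $p^{n/2}\mathcal{H}_{f_0+\sum_{i=1}^{k-1}a_if_i}(\mathbf{u})$, and one obtains
\begin{equation*}
\mathcal{H}_f(\mathbf{u})=\frac{1}{p^{k-1}}\sum_{\mathbf{a}\in\Z_p^{k-1}}\mathcal{H}_{f_0+\sum_{i=1}^{k-1}a_if_i}(\mathbf{u})\,\delta_{\mathbf{a}},
\qquad
\delta_{\mathbf{a}}:=\prod_{i=1}^{k-1}\Big(\sum_{l\in\Z_p}\zeta_p^{la_i}\eta_i^{p-l}\Big),
\end{equation*}
which is \eqref{07} with $\gamma_{\mathbf{a}}$ replaced by $\delta_{\mathbf{a}}$. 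Moreover, as in Lemma \ref{Lemma2.9}, the substitution $v_i:=p-l$ rewrites $\delta_{\mathbf{a}}=\sum_{\mathbf{v}\in\Z_p^{k-1}}\zeta_p^{-\mathbf{a}\cdot\mathbf{v}}\,\zeta_q^{\sum_{i=1}^{k-1}v_ip^{k-1-i}}$.

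Next I would feed in the hypothesis. Fix $\mathbf{u}$ and take $\mathbf{v}=\mathbf{v}(\mathbf{u})$, $j=j(\mathbf{u})$ as supplied by the assumption; as in Theorem \ref{thm3.1.2} these are understood to depend only on $\mathbf{u}$, not on $\mathbf{a}$, while $\alpha$ is a fixed complex number of unit magnitude, the same for all $\mathbf{u}$ and $\mathbf{a}$. Then $\mathcal{H}_{f_0+\sum_{i=1}^{k-1}a_if_i}(\mathbf{u})=\alpha\,\zeta_p^{\mathbf{v}\cdot\mathbf{a}+j}$ for all $\mathbf{a}$, so pulling the constants out of the sum above gives
\begin{equation*}
\mathcal{H}_f(\mathbf{u})=\alpha\,\zeta_p^{j}\cdot\frac{1}{p^{k-1}}\sum_{\mathbf{a}\in\Z_p^{k-1}}\zeta_p^{\mathbf{v}\cdot\mathbf{a}}\,\delta_{\mathbf{a}}.
\end{equation*}
Expanding $\delta_{\mathbf{a}}$ and using orthogonality of the additive characters of $\Z_p$ — this is the computation of Lemma \ref{Lemma2.8} with $\zeta_{p^k}$ replaced by $\zeta_q$ — the double sum collapses to $\prod_{i=1}^{k-1}\eta_i^{v_i}=\zeta_q^{\sum_{i=1}^{k-1}v_ip^{k-1-i}}$. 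Finally, using $p\mid q$ once more to write $\zeta_p^{j}=\zeta_q^{\frac{q}{p}j}$, I obtain
\begin{equation*}
\mathcal{H}_f(\mathbf{u})=\alpha\,\zeta_q^{\frac{q}{p}j(\mathbf{u})+\sum_{i=1}^{k-1}v_i(\mathbf{u})p^{k-1-i}}=\alpha\,\zeta_q^{f^{\ast}(\mathbf{u})},
\qquad
f^{\ast}(\mathbf{u}):=\frac{q}{p}j(\mathbf{u})+\sum_{i=1}^{k-1}v_i(\mathbf{u})p^{k-1-i}\ (\mathrm{mod}\ q),
\end{equation*}
with $f^{\ast}:\Z_p^n\to\Z_q$ well defined since $\frac{q}{p}\in\Z$. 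In particular $|\mathcal{H}_f(\mathbf{u})|=|\alpha|=1$ for every $\mathbf{u}$, so $f$ is gbent, and the last display exhibits $f$ as weakly regular with dual $f^{\ast}$.

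I do not expect a genuine obstacle here: everything is a direct evaluation, and — in contrast with Section \ref{sec3.1} — no linear-independence property of the $\delta_{\mathbf{a}}$ is required, which is precisely why only a sufficient (not a necessary) condition is claimed. The single point demanding care is the bookkeeping with the roots of unity: one must keep $\eta_i=\zeta_q^{p^{k-1-i}}$ throughout, so that Lemmas \ref{Lemma2.8} and \ref{Lemma2.9} are invoked with $\zeta_{p^k}$ replaced systematically by $\zeta_q$, and one must check that the normalisation $\frac{q}{p}f_0(x)$ is exactly what lets both ends close up — it turns $\zeta_q^{f_0(x)}$ into $\zeta_p^{f_0(x)}$ at the outset, and lets the leftover factor $\zeta_p^{j}$ be reabsorbed as $\zeta_q^{\frac{q}{p}j}$ into an honest $\Z_q$-valued dual at the end.
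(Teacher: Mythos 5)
Your proposal is correct and follows essentially the same route as the paper: the same decomposition $\mathcal{H}_f(\mathbf{u})=\frac{1}{p^{k-1}}\sum_{\mathbf{a}}\mathcal{H}_{f_0+\sum_i a_if_i}(\mathbf{u})\widetilde{\gamma_{\mathbf{a}}}$ (your $\delta_{\mathbf{a}}$ is exactly the paper's $\widetilde{\gamma_{\mathbf{a}}}$), followed by substituting the hypothesis and collapsing the double sum by character orthogonality to get $\mathcal{H}_f(\mathbf{u})=\alpha\zeta_q^{\frac{q}{p}j+\sum_i v_ip^{k-1-i}}$. You are merely more explicit than the paper in two places it glosses over — the derivation of the analogue of \eqref{07} (which the paper dismisses with ``similarly as in Theorem \ref{thm3.1.1}'') and the reading that $\mathbf{v},j$ depend only on $\mathbf{u}$ while $\alpha$ is fixed — both of which match the paper's intent.
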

\begin{proof}
Similarly as in Theorem \ref{thm3.1.1}, we can get
\begin{equation*}
\mathcal{H}_f(\textbf{u})=\frac{1}{p^{k-1}} \sum \limits_{\textbf{a} \in \Z_p^{k-1}} \mathcal{H}_{f_0+\sum_{i=1}^{k-1}a_if_i}(\textbf{u}) \widetilde{\gamma_{\textbf{a}}}.
\end{equation*}
Note that $\widetilde{\gamma_{\textbf{a}}}=\sum_{\textbf{v}\in \Z_{p}^{k-1}} \zeta_p^{- \textbf{a}\cdot \textbf{v}} \zeta_{q}^{\sum_{i=1}^{k-1}v_jp^{k-1-i}}$.
For any $\textbf{u}, \textbf{a} \in \Z_p^{k-1}$, if there exists some $\textbf{v} \in \Z_p^{k-1}$ and $j\in \Z_p$ such that
\begin{eqnarray*}
\mathcal{H}_{f_0+\sum_{i=1}^{k-1}a_if_i}(\textbf{u})=\alpha \zeta_p^{\textbf{v}\cdot \textbf{a}+j},
\end{eqnarray*}
then we have
\begin{eqnarray*}
\mathcal{H}_f(\textbf{u})&=& \frac{1}{p^{k-1}} \sum \limits_{\textbf{a} \in \Z_p^{k-1}} \alpha \zeta_p^{\textbf{v}\cdot \textbf{a}+j}   \widetilde{\gamma_{\textbf{a}}}\\
&=&\frac{\alpha}{p^{k-1}} \sum \limits_{\textbf{a} \in \Z_p^{k-1}} \zeta_p^{\textbf{v}\cdot \textbf{a}+j}
\sum \limits_{\textbf{w}\in \Z_{p}^{k-1}} \zeta_p^{- \textbf{a}\cdot \textbf{w}} \zeta_{q}^{\sum_{i=1}^{k-1}w_ip^{k-1-i}}\\
&=& \frac{\alpha}{p^{k-1}}  \zeta_p^{j} \sum \limits_{\textbf{w}\in \Z_{p}^{k-1}}  \zeta_{q}^{\sum_{i=1}^{k-1}w_ip^{k-1-i}}
\sum \limits_{\textbf{a} \in \Z_p^{k-1}} \zeta_p^{(\textbf{v}-\textbf{w})\cdot \textbf{a}} \\
&=&\alpha \zeta_p^{j}  \zeta_{q}^{\sum_{i=1}^{k-1}v_ip^{k-1-i}}\\
&=&\alpha  \zeta_{q}^{\frac{qj}{p}  + \sum_{i=1}^{k-1}v_ip^{k-1-i}}.
\end{eqnarray*}
Therefore, we conclude that $f$ is a weakly regular gbent function. This completes the proof.
\end{proof}

\begin{Remark}
\label{Remark3.2.1}
Under the assumptions of Theorem  {\ref{thm3.2.1}}, it is easily to get that $f_{\pi}(\mathbf{x})$ is always  weakly regular
gbent, where $f_{\pi}(\mathbf{x})$ is defined as $f_{\pi}(\mathbf{x}) = \frac{q}{p}f_0(x)+\sum_{i=1}^{k-1}f_{\pi}(x)p^{k-1-i}$
for any permutation $\pi$ of $\{1,2,\cdots, k-1\}$.
\end{Remark}

\begin{Remark}
\label{Remark3.2.2}
For gbent functions in $\mathcal{GB}_n^{p^k}$, we have proved that $\mathcal{H}_f(\textbf{u}) = \alpha \zeta_{p^k}^{f^{\ast}(\mathbf{u})}$
for any $\mathbf{u} \in \Z_p^n$  and $\alpha \in \{\pm 1, \pm \sqrt{-1} \}$ in section \ref{sec2}. However, for gbent functions in $\mathcal{GB}_n^{q}$ $(p^{k-1}<q<p^k)$,
where $q$ is divided by $p$ but not the power of $p$, we can not completely determine what $\mathcal{H}_f(\textbf{u})$ can attain for
any $\mathbf{u} \in \Z_p^n$.

If we assume that $\alpha \in \{\pm 1, \pm \sqrt{-1} \}$ in theorem \ref{thm3.2.1}, we still can not get sufficient and necessary conditions for
gbent functions in $\mathcal{GB}_n^{q}$, because, in this case,  $\{ \widetilde{\gamma_{\mathbf{a}}} |~   \mathbf{a} \in \Z_p^{k-1}\}$ is not a
basis of  $\Q (\zeta_q)$ over $\Q (\zeta_p)$  (since $\frac{\phi(q)}{p-1}< p^{k-1}$, where~$\phi$~is~Euler's  totient~function).
\end{Remark}

\section{Construction of gbent functions}
\label{sec4}
The results from previous sections allow us to construct gbent functions in $\mathcal{GB}_n^{p^k}$ (or $\mathcal{GB}_n^{q}$).
In fact, the conditions in Theorem \ref{thm3.1.2}  can be transformed in the following words.
For $(a_1,a_2,\cdots,a_{k-1}) \in \Z_p^n$, let $i:=\sum_{j=1}^{k-1}a_jp^{j-1}$ and denote $\mathbf{a}_i=(a_1,a_2,\cdots,a_{k-1})$.
Denote $\mathcal{H}_{\textbf{a}_i}(\textbf{u})=\mathcal{H}_{f_0+\sum_{i=1}^{k-1}a_if_i}(\textbf{u}) = \alpha \zeta_p^j \zeta_p^{\textbf{v}\cdot \textbf{a}}$,
where $\alpha \in \{\pm 1, \pm \sqrt{-1} \}$. It is easily to see that  for any $\mathbf{u} \in \Z_p^{n}$, we have
 $( \mathcal{H}_{\textbf{a}_0}(\textbf{u}), \mathcal{H}_{\textbf{a}_1}(\textbf{u}), \cdots, \mathcal{H}_{\textbf{a}_{p^{k-1}-1}}(\textbf{u}) ) =
 \alpha \zeta_p^j (\zeta_p^{\textbf{v}\cdot \textbf{a}_0},  \zeta_p^{\textbf{v}\cdot \textbf{a}_1}, \cdots,
\zeta_p^{\textbf{v}\cdot \textbf{a}_{p^{k-1}-1}}  )  = \alpha \zeta_p^j  H_{p^{k-1}}^{(r)} $, where $H_{p^{k-1}} = H_p^{\otimes{(k-1)}}$   is a generalized
Hadamad matrix of order $p^{k-1}$ and $H_p$ is defined as $H_p=(\zeta_p^{i\cdot j})_{0 \leq i,j \leq p-1}$,  $H_{p^{k-1}}^{(r)}$ stands for the $r$-th row of $H_{p^{k-1}}$.
Therefore, Theorem \ref{thm3.1.2} can restate as follows:

Let $k \geq 2$ and $f(x)=\sum_{i=0}^{k-1}f_i(x)p^{k-1-i} \in \mathcal{GB}_{n}^{p^k}$ and $f_i \in \mathcal{B}_{n}^p$.
Then $f$ is gbent if and only if for every $\textbf{u} \in \Z_p^{n}$, there exists some $\textbf{v} \in \Z_p^{k-1}$
and $j\in \Z_p$ such that
\begin{eqnarray*}
\begin{split}
( \mathcal{H}_{\textbf{a}_0}(\textbf{u}), \mathcal{H}_{\textbf{a}_1}(\textbf{u}), \cdots, \mathcal{H}_{\textbf{a}_{p^{k-1}-1}}(\textbf{u}) )=\left\{
\begin{array}{l}
\pm \zeta_p^{j} (\zeta_p^{\textbf{v}\cdot \textbf{a}_0},  \zeta_p^{\textbf{v}\cdot \textbf{a}_1}, \cdots,
\zeta_p^{\textbf{v}\cdot \textbf{a}_{p^{k-1}-1}}  ) \\
~$if$~$n$~is~even~~or~~$n$~is ~odd~and~p \equiv 1(mod~4),\\
\\
\pm  \sqrt{-1} \zeta_p^j (\zeta_p^{\textbf{v}\cdot \textbf{a}_0},  \zeta_p^{\textbf{v}\cdot \textbf{a}_1}, \cdots,
\zeta_p^{\textbf{v}\cdot \textbf{a}_{p^{k-1}-1}}  )\\
 ~ $if$~ $n$~is ~odd~and~p \equiv 3(mod~4).
\end{array}
\right.
\end{split}
\end{eqnarray*}
where $\mathbf{a}=(a_1,a_2,\cdots,a_{k-1}).$

\begin{thm}
\label{thm4.1}
Let $k \geq 2$, $n=2m$,  and $f(x)=\sum_{i=0}^{k-1}f_i(x)p^{k-1-i} \in \mathcal{GB}_{2m}^{p^k}$. Let $f_0(x)=\sum_{i=1}^m \beta_ix_ix_{i+m}$  and
$f_i(x) \in \mathcal{A}(x_1,x_2,\cdots,x_m;\Z_p)$, $1 \leq i \leq m$, where $\beta_i \in \Z_p$  and $\mathcal{A}(x_1,x_2,\cdots,x_m;\Z_p)$ denotes
the set of all affine functions from $\Z_p^n \rightarrow \Z_p$, with variables $x_1,\cdots, x_m$ and coefficients in $\Z_p$.
Then $f$ is gbent.
\end{thm}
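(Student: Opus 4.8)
The plan is to verify the restated characterization from Theorem \ref{thm3.1.2} directly for the specific functions at hand, namely $f_0(x) = \sum_{i=1}^m \beta_i x_i x_{i+m}$ and affine $f_i(x) \in \mathcal{A}(x_1,\ldots,x_m;\Z_p)$ for $1 \le i \le k-1$. First I would compute the relevant Walsh coefficients $\mathcal{H}_{f_0 + \sum_{i=1}^{k-1} a_i f_i}(\textbf{u})$ for arbitrary $\textbf{a} = (a_1,\ldots,a_{k-1}) \in \Z_p^{k-1}$ and $\textbf{u} \in \Z_p^n$. Writing $g_{\textbf{a}} := f_0 + \sum_{i=1}^{k-1} a_i f_i$, note that $\sum_{i=1}^{k-1} a_i f_i$ is again an affine function in the variables $x_1,\ldots,x_m$ (since each $f_i$ is), say $\sum_{i=1}^{k-1} a_i f_i(x) = \textbf{c}(\textbf{a}) \cdot (x_1,\ldots,x_m) + d(\textbf{a})$, where the coefficient vector $\textbf{c}(\textbf{a})$ and constant $d(\textbf{a})$ depend $\Z_p$-linearly on $\textbf{a}$. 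Thus $g_{\textbf{a}}$ is a quadratic form $\sum_{i=1}^m \beta_i x_i x_{i+m}$ plus an affine term involving only the first $m$ variables.

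The key computation is then the Walsh transform of such a function. Splitting $\textbf{x} = (\textbf{x}', \textbf{x}'')$ with $\textbf{x}', \textbf{x}'' \in \Z_p^m$ and $\textbf{u} = (\textbf{u}', \textbf{u}'')$ accordingly, we get
\begin{equation*}
p^{n/2} \mathcal{H}_{g_{\textbf{a}}}(\textbf{u}) = \sum_{\textbf{x}', \textbf{x}''} \zeta_p^{-\textbf{u}' \cdot \textbf{x}' - \textbf{u}'' \cdot \textbf{x}'' + \sum_i \beta_i x_i x_{i+m} + \textbf{c}(\textbf{a}) \cdot \textbf{x}' + d(\textbf{a})}.
\end{equation*}
Summing over $\textbf{x}''$ first forces, for each coordinate $i$, the condition $\beta_i x_i = u_i''$, i.e. $x_i = \beta_i^{-1} u_i''$ (assuming all $\beta_i \ne 0$, which we should note as a hypothesis or handle the degenerate case separately); the sum over $\textbf{x}''$ vanishes unless this holds, in which case it contributes $p^m$. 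The remaining sum over $\textbf{x}'$ collapses to a single term, yielding $\mathcal{H}_{g_{\textbf{a}}}(\textbf{u}) = \zeta_p^{E(\textbf{a}, \textbf{u})}$ for an exponent $E$ that is affine in $\textbf{a}$. Writing $E(\textbf{a}, \textbf{u}) = \textbf{v}(\textbf{u}) \cdot \textbf{a} + j(\textbf{u})$ — which is possible precisely because $\textbf{c}(\textbf{a})$ and $d(\textbf{a})$ are linear in $\textbf{a}$ — we land exactly in the form demanded by Theorem \ref{thm3.1.2}, with $\alpha = +1$ and $n = 2m$ even, so $f$ is gbent.

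The main obstacle, and the part requiring care, is the bookkeeping that makes the exponent $E(\textbf{a},\textbf{u})$ genuinely affine in $\textbf{a}$ with a $\textbf{u}$-dependent (but $\textbf{a}$-independent) leading coefficient $\textbf{v}(\textbf{u})$ and constant $j(\textbf{u})$: one must track how $\textbf{c}(\textbf{a}) \cdot (\beta_1^{-1}u_1'', \ldots, \beta_m^{-1}u_m'')$ and $d(\textbf{a})$ assemble, verify the linear dependence on $\textbf{a}$ is clean, and confirm that $\textbf{v}$ and $j$ indeed depend only on $\textbf{u}$ and the fixed data $\beta_i, f_i$. A secondary point is the nondegeneracy assumption on the $\beta_i$ (needed for $f_0$ to be bent); if some $\beta_i = 0$ the corresponding variable pair contributes differently and $f$ would generally fail to be gbent, so the statement implicitly requires $\beta_i \in \Z_p^\ast$, which I would make explicit. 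Once these are in place, invoking the restated Theorem \ref{thm3.1.2} finishes the proof immediately.
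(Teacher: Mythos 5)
Your proposal is correct and follows essentially the same route as the paper: verify the restated criterion of Theorem \ref{thm3.1.2} by computing $\mathcal{H}_{f_0+\sum_i a_if_i}(\mathbf{u})$ directly, summing over the second block of variables to force $x_i=\beta_i^{-1}u_{i+m}$ and observing that the surviving exponent is affine in $\mathbf{a}$. Your remark that the $\beta_i$ must lie in $\Z_p^{\ast}$ is a fair catch --- the paper writes $\beta_i\in\Z_p$ but uses $\beta_i^{-1}$ in its own computation, so the nondegeneracy hypothesis is indeed implicit there as well.
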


\begin{proof}
With the help of the previous discussion, we only need to prove that for every $\mathbf{a}_i=(a_1, \cdots, a_{k-1}) \in \Z_p^{k-1}$ and $\textbf{u} \in \Z_p^{n}$
there exists some $\textbf{v} \in \Z_p^{k-1}$ and $j\in \Z_p$ such that
\[\mathcal{H}_{\textbf{a}_i}(\textbf{u}) = \epsilon \zeta_p^j \cdot \zeta_p^{\textbf{v}\cdot \textbf{a}_i},\]
where $\epsilon= 1$ or $-1$ is independent of $\textbf{a}_i$ (in this case $\epsilon=1$).\\
By definition
\begin{eqnarray*}
\mathcal{H}_{\textbf{a}_i}(\textbf{u}) &=& p^{-m} \sum \limits_{ \mathbf{x} \in \Z_p^{2m}} \zeta_p^{f_0(\mathbf{x})+\sum_{i=1}^{k-1}a_if_i(\mathbf{x}) - \mathbf{u}\cdot\mathbf{x}}\\
 &=&p^{-m} \sum \limits_{ \mathbf{x} \in \Z_p^{2m}} \zeta_p^{\sum_{i=1}^m\beta_ix_ix_{i+m} + \sum_{i=1}^{k-1}a_il_i(x_1,\cdots,x_m)- \sum_{i=1}^{2m}u_ix_i }\\
 &=&p^{-m} \left(\sum \limits_{ (x_1,x_{m+1}) \in \Z_p^{2}}  \cdots   \left(\sum \limits_{(x_{m},x_{2m}) \in \Z_p^2}  \zeta_p^{\beta_mx_mx_{2m} +  x_{m} l^{(m)}(a_1,\cdots, a_{k-1})- \sum_{i=1}^2 u_{mi}x_{mi} } \right) \right)\\
 &=&p^{-m} \left(\sum \limits_{ (x_1,x_{m+1}) \in \Z_p^{2}}  \cdots   \left(\sum \limits_{x_{m} \in \Z_p} \zeta_p^{x_{m} l^{(m)}(a_1,\cdots, a_{k-1})-u_mx_m } \sum \limits_{x_{2m} \in \Z_p}  \zeta_p^{\beta_mx_mx_{2m} - u_{2m}x_{2m} } \right)  \right)\\
 &=&p^{-m} \left(\sum \limits_{ (x_1,x_{m+1}) \in \Z_p^{2}}  \cdots   \left( p\zeta_p^{ u_{2m} \beta_m^{-1}(l^{(m)}(a_1,\cdots, a_{k-1})-u_m)}   \right)  \right)\\
 &=&  \zeta_p^{ \sum_{i=1}^m u_{2i} \beta_i^{-1}(l^{(i)}(a_1,\cdots, a_{k-1})-u_i)} \\
 &=&  \zeta_p^{g_0(\beta_1,\cdots,\beta_m;u_1,\cdots, u_{2m}) + \sum_{i=1}^{k-1}  a_ig_i(\beta_1,\cdots,\beta_m;u_1,\cdots, u_{2m})} \\
 &=&  \zeta_p^j \zeta_p^{\textbf{v}\cdot \textbf{a}_i}.
\end{eqnarray*}
Where $j=g_0(\beta_1,\cdots,\beta_m;u_1,\cdots, u_{2m})$ and $\mathbf{v}  = \big(g_1(\beta_1,\cdots,\beta_m;u_1,\cdots, u_{2m}), \\ \cdots, g_{k-1}(\beta_1,\cdots,\beta_m;u_1,\cdots, u_{2m})\big)$,
$g_i$ is a  function with variables $\beta_1,\cdots,\beta_m$,  $u_1,\cdots, u_{2m}$,  and $l^{(i)}(a_1,\cdots, a_{k-1}) \in \mathcal{A}(a_1,\cdots, a_{k-1};\Z_p)$.

This completes the proof.
\end{proof}

\begin{Remark}
\label{4.1}
we note that the above discussions and  Theorem \ref{thm4.1} is also adapt to gbent functions in $\mathcal{GB}_n^q$, where $q$ is divided by $p$ and
not the power of it.
\end{Remark}

\begin{Corollary}
\label{Corollary4.1}
Let $k \geq 2$, $n=2m$,  and $f(x)=\sum_{i=0}^{k-1}f_i(x)p^{k-1-i} \in \mathcal{GB}_{2m}^{p^k}$. Let $f_0(x)=\sum_{i=1}^m \beta_ix_ix_{i+m}$  and
$f_i(x) =c_i$, where $\beta_i, c_i \in \Z_p$,  $1 \leq i \leq m$, then $f$ is gbent.
\end{Corollary}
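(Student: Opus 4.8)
The plan is to derive Corollary \ref{Corollary4.1} as a direct specialization of Theorem \ref{thm4.1}. First I would observe that the hypothesis of Theorem \ref{thm4.1} requires $f_0(x)=\sum_{i=1}^m\beta_ix_ix_{i+m}$ with $\beta_i\in\Z_p$, which is exactly what is assumed here, and requires each $f_i$ (for $1\le i\le k-1$) to lie in $\mathcal{A}(x_1,\dots,x_m;\Z_p)$, the set of affine functions in the variables $x_1,\dots,x_m$ with coefficients in $\Z_p$. The key point is that a constant function $f_i(x)=c_i$ with $c_i\in\Z_p$ is trivially affine: it has zero linear part and constant term $c_i$, hence $c_i\in\mathcal{A}(x_1,\dots,x_m;\Z_p)$. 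Therefore the present $f$ satisfies all the hypotheses of Theorem \ref{thm4.1}, and the conclusion that $f$ is gbent follows immediately.

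For completeness I would also spell out what the Walsh--Hadamard spectrum looks like in this concrete case, by specializing the computation in the proof of Theorem \ref{thm4.1}. With $l^{(i)}(a_1,\dots,a_{k-1})$ being the (in fact constant, equal to $a_i$ up to relabeling) affine form coming from $f_i=c_i$, the same telescoping sum over the pairs $(x_i,x_{i+m})$ applies: summing first over $x_{2i}\in\Z_p$ forces $x_i=\beta_i^{-1}u_{2i}$ and produces a factor $p$, so that after dividing by $p^m$ one is left with a single root of unity. Explicitly one obtains
\begin{equation*}
\mathcal{H}_{\mathbf{a}_i}(\mathbf{u})=\zeta_p^{\sum_{i=1}^m u_{2i}\beta_i^{-1}(c_i-u_i)}\zeta_p^{\mathbf{v}\cdot\mathbf{a}_i}
\end{equation*}
for an appropriate $\mathbf{v}\in\Z_p^{k-1}$ and with $\epsilon=1$, which has unit magnitude; invoking the restated form of Theorem \ref{thm3.1.2} from the beginning of Section \ref{sec4} then gives the gbent property. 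In fact, because the $c_i$ are constants, the dependence of the phase on $\mathbf{a}_i$ collapses and $f$ is seen to be not merely gbent but regular.

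There is essentially no obstacle here: the only thing to check is the (routine) claim that constants are affine, and after that the result is a one-line application of Theorem \ref{thm4.1}, so the ``hard part'' is purely bookkeeping — making sure the indices $1\le i\le m$ (governing the variables and coefficients $\beta_i$) and $1\le i\le k-1$ (governing the component functions $f_i$) are not conflated, since the statement slightly abuses the index $i$ in both roles. I would write the proof as: ``Since $c_i\in\Z_p$ is a constant, $f_i=c_i\in\mathcal{A}(x_1,\dots,x_m;\Z_p)$ for all $1\le i\le k-1$; the result now follows from Theorem \ref{thm4.1}.''
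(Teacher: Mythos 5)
Your proposal is correct and follows exactly the paper's route: the paper's own proof is the one-liner ``From Theorem \ref{thm4.1}, the corollary follows,'' and your observation that a constant $c_i\in\Z_p$ is trivially an element of $\mathcal{A}(x_1,\dots,x_m;\Z_p)$ is precisely the (implicit) justification needed. The additional spelled-out spectrum computation is a faithful specialization of the proof of Theorem \ref{thm4.1} and adds nothing that changes the argument.
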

\begin{proof}
From Theorem \ref{thm4.1}, the corollary follows.
\end{proof}

In the following, we give the examples of gbent functions for $q=p^k$ and not, respectively.
\begin{Example}
Let $f: \Z_3^4 \rightarrow \Z_{27}$ and $f(x)=9f_0(x)+3f_1(x)+f_2(x)$, where $f_0(x)=2x_1x_3+x_2x_4$, $f_1(x)=x_1+x_2$ and $f_2(x)=x_1$.
From Theorem \ref{thm4.1}, we know that $f$ is gbent.

Denoting $\mathcal{H}_{\textbf{a}}(\textbf{u}) =  ( \mathcal{H}_{\textbf{a}_0}(\textbf{u}), \mathcal{H}_{\textbf{a}_1}(\textbf{u}), \cdots, \mathcal{H}_{\textbf{a}_{8}}(\textbf{u}) )$,
for $\mathbf{u} \in \Z_3^4$, the vectors $\mathcal{H}_{\textbf{a}}(\textbf{u})$ are given in Table \ref{table1}. We note that $H_9=H_3^{\otimes 2}$, where
\begin{gather*}
H_3=
\begin{pmatrix}
1 &  1 &  1 \\
1 &  \zeta_3 & \zeta_3^2\\
1 &　\zeta_3^2  & \zeta_3
\end{pmatrix}.
\end{gather*}
\end{Example}

\begin{Example}
Let $f: \Z_3^4 \rightarrow \Z_{21}$ and $f(x)=7f_0(x)+3f_1(x)+f_2(x)$, where $f_0(x)=x_1x_3+2x_2x_4$, $f_1(x)=2x_1+x_2$ and $f_2(x)=1$.
From Theorem \ref{thm4.1} and \ref{thm3.2.1}, we know that $f$ is gbent.

Denoting $\mathcal{H}_{\textbf{a}}(\textbf{u}) =  ( \mathcal{H}_{\textbf{a}_0}(\textbf{u}), \mathcal{H}_{\textbf{a}_1}(\textbf{u}), \cdots, \mathcal{H}_{\textbf{a}_{8}}(\textbf{u}) )$,
for $\mathbf{u} \in \Z_3^4$, the vectors $\mathcal{H}_{\textbf{a}}(\textbf{u})$ are given in Table \ref{table2}.
\end{Example}

\section{Concluding remarks}
\label{sec5}
In this paper, we have investigated gbent functions in $\mathcal{GB}_{n}^{q}$.  For $q=p^k$, the complete characterization of gbent functions in $\mathcal{GB}_n^{p^k}$
in terms of classical $p$-ary bent functions are given, and for  such  $q$   divided by $p$ but not a power of it, we give the sufficient conditions for weakly
regular gbent functions. Besides, constructions of  such gbent functions in $\mathcal{GB}_n^{q}$ is also considered.  It would be
interesting to construct more gbent  functions  from $\Z_p^{n}$ to $ \Z_q$.

%\begin{center}
%\large \uppercase\expandafter{\romannumeral7}. \textsc{Acknowledgement}
%\end{center}

%\bibitem[显示符号]{引用标签}

\begin{appendices}
\section{ $\mathcal{H}_{\textbf{a}}(\textbf{u})$ for $f$ in Example 4.1 }
\begin{longtable}{|c|c|c|}
\caption{Vectors $\mathcal{H}_{\textbf{a}}(\textbf{u})$ for all $\mathbf{u} \in \Z_3^4$} \label{table1}\\
\hline
$\mathbf{u} \in \Z_3^4$  &　　$\mathcal{H}_{\textbf{a}}(\textbf{u}) =  ( \mathcal{H}_{\textbf{a}_0}(\textbf{u}), \cdots, \mathcal{H}_{\textbf{a}_{8}}(\textbf{u}) )$　　&　$\mathcal{H}_{\textbf{a}}(\textbf{u}) = \zeta_3^iH_{9}^{(r)}$\\
\hline

\hline
$\mathbf{u_0} =(0,0,0,0)$     &  (1,1,1,1,1,1,1,1,1)        &      $H_{9}^{(0)}$\\
\hline
$\mathbf{u_1} =(1,0,0,0)$     &  (1,1,1,1,1,1,1,1,1)        &      $H_{9}^{(0)}$\\
\hline
$\mathbf{u_2} =(2,0,0,0)$     &  (1,1,1,1,1,1,1,1,1)        &      $H_{9}^{(0)}$\\
\hline
$\mathbf{u_3} =(0,1,0,0)$     &  (1,1,1,1,1,1,1,1,1)        &      $H_{9}^{(0)}$\\
\hline
$\mathbf{u_4} =(1,1,0,0)$     &  (1,1,1,1,1,1,1,1,1)        &      $H_{9}^{(0)}$\\
\hline
$\mathbf{u_5} =(2,1,0,0)$     &  (1,1,1,1,1,1,1,1,1)        &      $H_{9}^{(0)}$\\
\hline
$\mathbf{u_6} =(0,2,0,0)$     &  (1,1,1,1,1,1,1,1,1)        &      $H_{9}^{(0)}$\\
\hline
$\mathbf{u_7} =(1,2,0,0)$     &  (1,1,1,1,1,1,1,1,1)        &      $H_{9}^{(0)}$\\
\hline
$\mathbf{u_8} =(2,2,0,0)$     &  (1,1,1,1,1,1,1,1,1)        &      $H_{9}^{(0)}$\\
\hline
$\mathbf{u_9} =(0,0,1,0)$     &  $(1,\zeta_3^2,\zeta_3,\zeta_3^2,\zeta_3,1,\zeta_3,1,\zeta_3^2)$        &      $H_{9}^{(8)}$\\
\hline
$\mathbf{u_{10}} =(1,0,1,0)$     &  $\zeta_3(1,\zeta_3^2,\zeta_3,\zeta_3^2,\zeta_3,1,\zeta_3,1,\zeta_3^2)$        &      $\zeta_3H_{9}^{(8)}$\\
\hline
$\mathbf{u_{11}} =(2,0,1,0)$     &  $\zeta_3^2(1,\zeta_3^2,\zeta_3,\zeta_3^2,\zeta_3,1,\zeta_3,1,\zeta_3^2)$        &      $\zeta_3^2H_{9}^{(8)}$\\
\hline
$\mathbf{u_{12}} =(0,1,1,0)$     &  $(1,\zeta_3^2,\zeta_3,\zeta_3^2,\zeta_3,1,\zeta_3,1,\zeta_3^2)$        &      $H_{9}^{(8)}$\\
\hline
$\mathbf{u_{13}} =(1,1,1,0)$     &  $\zeta_3(1,\zeta_3^2,\zeta_3,\zeta_3^2,\zeta_3,1,\zeta_3,1,\zeta_3^2)$        &      $\zeta_3H_{9}^{(8)}$\\
\hline
$\mathbf{u_{14}} =(2,1,1,0)$     &  $\zeta_3^2(1,\zeta_3^2,\zeta_3,\zeta_3^2,\zeta_3,1,\zeta_3,1,\zeta_3^2)$        &      $\zeta_3^2H_{9}^{(8)}$\\
\hline
$\mathbf{u_{15}} =(0,2,1,0)$     &  $(1,\zeta_3^2,\zeta_3,\zeta_3^2,\zeta_3,1,\zeta_3,1,\zeta_3^2)$        &      $H_{9}^{(8)}$\\
\hline
$\mathbf{u_{16}} =(1,2,1,0)$     &  $\zeta_3(1,\zeta_3^2,\zeta_3,\zeta_3^2,\zeta_3,1,\zeta_3,1,\zeta_3^2)$        &      $\zeta_3H_{9}^{(8)}$\\
\hline
$\mathbf{u_{17}} =(2,2,1,0)$     &  $\zeta_3^2(1,\zeta_3^2,\zeta_3,\zeta_3^2,\zeta_3,1,\zeta_3,1,\zeta_3^2)$        &      $\zeta_3^2H_{9}^{(8)}$\\
\hline
$\mathbf{u_{18}} =(0,0,2,0)$     &  $(1,\zeta_3,\zeta_3^2,\zeta_3,\zeta_3^2,1,\zeta_3^2,1,\zeta_3)$        &      $H_{9}^{(4)}$\\
\hline
$\mathbf{u_{19}} =(1,0,2,0)$     &  $\zeta_3^2(1,\zeta_3,\zeta_3^2,\zeta_3,\zeta_3^2,1,\zeta_3^2,1,\zeta_3)$        &      $\zeta_3^2H_{9}^{(4)}$\\
\hline
$\mathbf{u_{20}} =(2,0,2,0)$     &  $\zeta_3(1,\zeta_3,\zeta_3^2,\zeta_3,\zeta_3^2,1,\zeta_3^2,1,\zeta_3)$        &      $\zeta_3H_{9}^{(4)}$\\
\hline
$\mathbf{u_{21}} =(0,1,2,0)$     &  $(1,\zeta_3,\zeta_3^2,\zeta_3,\zeta_3^2,1,\zeta_3^2,1,\zeta_3)$        &      $H_{9}^{(4)}$\\
\hline
$\mathbf{u_{22}} =(1,1,2,0)$     &  $\zeta_3^2(1,\zeta_3,\zeta_3^2,\zeta_3,\zeta_3^2,1,\zeta_3^2,1,\zeta_3)$        &      $\zeta_3^2H_{9}^{(4)}$\\
\hline
$\mathbf{u_{23}} =(2,1,2,0)$     &  $\zeta_3(1,\zeta_3,\zeta_3^2,\zeta_3,\zeta_3^2,1,\zeta_3^2,1,\zeta_3)$        &      $\zeta_3H_{9}^{(4)}$\\
\hline
$\mathbf{u_{24}} =(0,2,2,0)$     &  $(1,\zeta_3,\zeta_3^2,\zeta_3,\zeta_3^2,1,\zeta_3^2,1,\zeta_3)$        &      $H_{9}^{(4)}$\\
\hline
$\mathbf{u_{25}} =(1,2,2,0)$     &  $\zeta_3^2(1,\zeta_3,\zeta_3^2,\zeta_3,\zeta_3^2,1,\zeta_3^2,1,\zeta_3)$        &      $\zeta_3^2H_{9}^{(4)}$\\
\hline
$\mathbf{u_{26}} =(2,2,2,0)$     &  $\zeta_3(1,\zeta_3,\zeta_3^2,\zeta_3,\zeta_3^2,1,\zeta_3^2,1,\zeta_3)$        &      $\zeta_3H_{9}^{(4)}$\\
\hline
$\mathbf{u_{27}} =(0,0,0,1)$     &  $(1,1,1,\zeta_3,\zeta_3,\zeta_3,\zeta_3^2,\zeta_3^2,\zeta_3^2)$        &      $H_{9}^{(3)}$\\
\hline
$\mathbf{u_{28}} =(1,0,0,1)$     &  $(1,1,1,\zeta_3,\zeta_3,\zeta_3,\zeta_3^2,\zeta_3^2,\zeta_3^2)$        &      $H_{9}^{(3)}$\\
\hline
$\mathbf{u_{29}} =(2,0,0,1)$     &  $(1,1,1,\zeta_3,\zeta_3,\zeta_3,\zeta_3^2,\zeta_3^2,\zeta_3^2)$        &      $H_{9}^{(3)}$\\
\hline
$\mathbf{u_{30}} =(0,1,0,1)$     &  $\zeta_3^2(1,1,1,\zeta_3,\zeta_3,\zeta_3,\zeta_3^2,\zeta_3^2,\zeta_3^2)$        &      $\zeta_3^2H_{9}^{(3)}$\\
\hline
$\mathbf{u_{31}} =(1,1,0,1)$     &  $\zeta_3^2(1,1,1,\zeta_3,\zeta_3,\zeta_3,\zeta_3^2,\zeta_3^2,\zeta_3^2)$        &      $\zeta_3^2H_{9}^{(3)}$\\
\hline
$\mathbf{u_{32}} =(2,1,0,1)$     &  $\zeta_3^2(1,1,1,\zeta_3,\zeta_3,\zeta_3,\zeta_3^2,\zeta_3^2,\zeta_3^2)$        &      $\zeta_3^2H_{9}^{(3)}$\\
\hline
$\mathbf{u_{33}} =(0,2,0,1)$     &  $\zeta_3(1,1,1,\zeta_3,\zeta_3,\zeta_3,\zeta_3^2,\zeta_3^2,\zeta_3^2)$        &      $\zeta_3H_{9}^{(3)}$\\
\hline
$\mathbf{u_{34}} =(1,2,0,1)$     &  $\zeta_3(1,1,1,\zeta_3,\zeta_3,\zeta_3,\zeta_3^2,\zeta_3^2,\zeta_3^2)$        &      $\zeta_3H_{9}^{(3)}$\\
\hline
$\mathbf{u_{35}} =(2,2,0,1)$     &  $\zeta_3(1,1,1,\zeta_3,\zeta_3,\zeta_3,\zeta_3^2,\zeta_3^2,\zeta_3^2)$        &      $\zeta_3H_{9}^{(3)}$\\
\hline
$\mathbf{u_{36}} =(0,0,1,1)$     &  $(1,\zeta_3^2,\zeta_3,1,\zeta_3^2,\zeta_3,1,\zeta_3^2,\zeta_3)$        &      $H_{9}^{(2)}$\\
\hline
$\mathbf{u_{37}} =(1,0,1,1)$     &  $\zeta_3(1,\zeta_3^2,\zeta_3,1,\zeta_3^2,\zeta_3,1,\zeta_3^2,\zeta_3)$        &      $\zeta_3H_{9}^{(2)}$\\
\hline
$\mathbf{u_{38}} =(2,0,1,1)$     &  $\zeta_3^2(1,\zeta_3^2,\zeta_3,1,\zeta_3^2,\zeta_3,1,\zeta_3^2,\zeta_3)$        &      $\zeta_3^2H_{9}^{(2)}$\\
\hline
$\mathbf{u_{39}} =(0,1,1,1)$     &  $\zeta_3^2(1,\zeta_3^2,\zeta_3,1,\zeta_3^2,\zeta_3,1,\zeta_3^2,\zeta_3)$        &      $\zeta_3^2H_{9}^{(2)}$\\
\hline
$\mathbf{u_{40}} =(1,1,1,1)$     &  $(1,\zeta_3^2,\zeta_3,1,\zeta_3^2,\zeta_3,1,\zeta_3^2,\zeta_3)$        &      $H_{9}^{(2)}$\\
\hline
$\mathbf{u_{41}} =(2,1,1,1)$     &  $\zeta_3(1,\zeta_3^2,\zeta_3,1,\zeta_3^2,\zeta_3,1,\zeta_3^2,\zeta_3)$        &      $\zeta_3H_{9}^{(2)}$\\
\hline
$\mathbf{u_{42}} =(0,2,1,1)$     &  $\zeta_3(1,\zeta_3^2,\zeta_3,1,\zeta_3^2,\zeta_3,1,\zeta_3^2,\zeta_3)$        &      $\zeta_3H_{9}^{(2)}$\\
\hline
$\mathbf{u_{43}} =(1,2,1,1)$     &  $\zeta_3^2(1,\zeta_3^2,\zeta_3,1,\zeta_3^2,\zeta_3,1,\zeta_3^2,\zeta_3)$        &      $\zeta_3^2H_{9}^{(2)}$\\
\hline
$\mathbf{u_{44}} =(2,2,1,1)$     &  $(1,\zeta_3^2,\zeta_3,1,\zeta_3^2,\zeta_3,1,\zeta_3^2,\zeta_3)$        &      $H_{9}^{(2)}$\\
\hline
$\mathbf{u_{45}} =(0,0,2,1)$     &  $(1,\zeta_3,\zeta_3^2,\zeta_3^2,1,\zeta_3,\zeta_3,\zeta_3^2,1)$        &      $H_{9}^{(7)}$\\
\hline
$\mathbf{u_{46}} =(1,0,2,1)$     &  $\zeta_3^2(1,\zeta_3,\zeta_3^2,\zeta_3^2,1,\zeta_3,\zeta_3,\zeta_3^2,1)$        &      $\zeta_3^2H_{9}^{(7)}$\\
\hline
$\mathbf{u_{47}} =(2,0,2,1)$     &  $\zeta_3(1,\zeta_3,\zeta_3^2,\zeta_3^2,1,\zeta_3,\zeta_3,\zeta_3^2,1)$        &      $\zeta_3H_{9}^{(7)}$\\
\hline
$\mathbf{u_{48}} =(0,1,2,1)$     &  $\zeta_3^2(1,\zeta_3,\zeta_3^2,\zeta_3^2,1,\zeta_3,\zeta_3,\zeta_3^2,1)$        &      $\zeta_3^2H_{9}^{(7)}$\\
\hline
$\mathbf{u_{49}} =(1,1,2,1)$     &  $\zeta_3(1,\zeta_3,\zeta_3^2,\zeta_3^2,1,\zeta_3,\zeta_3,\zeta_3^2,1)$        &      $\zeta_3H_{9}^{(7)}$\\
\hline
$\mathbf{u_{50}} =(2,1,2,1)$     &  $(1,\zeta_3,\zeta_3^2,\zeta_3^2,1,\zeta_3,\zeta_3,\zeta_3^2,1)$        &      $H_{9}^{(7)}$\\
\hline
$\mathbf{u_{51}} =(0,2,2,1)$     &  $\zeta_3(1,\zeta_3,\zeta_3^2,\zeta_3^2,1,\zeta_3,\zeta_3,\zeta_3^2,1)$        &      $\zeta_3H_{9}^{(7)}$\\
\hline
$\mathbf{u_{52}} =(1,2,2,1)$     &  $(1,\zeta_3,\zeta_3^2,\zeta_3^2,1,\zeta_3,\zeta_3,\zeta_3^2,1)$        &      $H_{9}^{(7)}$\\
\hline
$\mathbf{u_{53}} =(2,2,2,1)$     &  $\zeta_3^2(1,\zeta_3,\zeta_3^2,\zeta_3^2,1,\zeta_3,\zeta_3,\zeta_3^2,1)$        &      $\zeta_3^2H_{9}^{(7)}$\\
\hline
$\mathbf{u_{54}} =(0,0,0,2)$     &  $(1,1,1,\zeta_3^2,\zeta_3^2,\zeta_3^2,\zeta_3,\zeta_3,\zeta_3^2)$        &      $H_{9}^{(6)}$\\
\hline
$\mathbf{u_{55}} =(1,0,0,2)$     &  $(1,1,1,\zeta_3^2,\zeta_3^2,\zeta_3^2,\zeta_3,\zeta_3,\zeta_3^2)$        &      $H_{9}^{(6)}$\\
\hline
$\mathbf{u_{56}} =(2,0,0,2)$     &  $(1,1,1,\zeta_3^2,\zeta_3^2,\zeta_3^2,\zeta_3,\zeta_3,\zeta_3^2)$        &      $H_{9}^{(6)}$\\
\hline
$\mathbf{u_{57}} =(0,1,0,2)$     &  $\zeta_3(1,1,1,\zeta_3^2,\zeta_3^2,\zeta_3^2,\zeta_3,\zeta_3,\zeta_3^2)$        &      $\zeta_3H_{9}^{(6)}$\\
\hline
$\mathbf{u_{58}} =(1,1,0,2)$     &  $\zeta_3(1,1,1,\zeta_3^2,\zeta_3^2,\zeta_3^2,\zeta_3,\zeta_3,\zeta_3^2)$        &      $\zeta_3H_{9}^{(6)}$\\
\hline
$\mathbf{u_{59}} =(2,1,0,2)$     &  $\zeta_3(1,1,1,\zeta_3^2,\zeta_3^2,\zeta_3^2,\zeta_3,\zeta_3,\zeta_3^2)$        &      $\zeta_3H_{9}^{(6)}$\\
\hline
$\mathbf{u_{60}} =(0,2,0,2)$     &  $\zeta_3^2(1,1,1,\zeta_3^2,\zeta_3^2,\zeta_3^2,\zeta_3,\zeta_3,\zeta_3^2)$        &      $\zeta_3^2H_{9}^{(6)}$\\
\hline
$\mathbf{u_{61}} =(1,2,0,2)$     &  $\zeta_3^2(1,1,1,\zeta_3^2,\zeta_3^2,\zeta_3^2,\zeta_3,\zeta_3,\zeta_3^2)$        &      $\zeta_3^2H_{9}^{(6)}$\\
\hline
$\mathbf{u_{62}} =(2,2,0,2)$     &  $\zeta_3^2(1,1,1,\zeta_3^2,\zeta_3^2,\zeta_3^2,\zeta_3,\zeta_3,\zeta_3^2)$        &      $\zeta_3^2H_{9}^{(6)}$\\
\hline
$\mathbf{u_{63}} =(0,0,1,2)$     &  $(1,\zeta_3^2,\zeta_3,\zeta_3,1,\zeta_3^2,\zeta_3^2,\zeta_3,1)$        &      $H_{9}^{(5)}$\\
\hline
$\mathbf{u_{64}} =(1,0,1,2)$     &  $\zeta_3(1,\zeta_3^2,\zeta_3,\zeta_3,1,\zeta_3^2,\zeta_3^2,\zeta_3,1)$        &      $\zeta_3H_{9}^{(5)}$\\
\hline
$\mathbf{u_{65}} =(2,0,1,2)$     &  $\zeta_3^2(1,\zeta_3^2,\zeta_3,\zeta_3,1,\zeta_3^2,\zeta_3^2,\zeta_3,1)$        &      $\zeta_3^2H_{9}^{(5)}$\\
\hline
$\mathbf{u_{66}} =(0,1,1,2)$     &  $\zeta_3(1,\zeta_3^2,\zeta_3,\zeta_3,1,\zeta_3^2,\zeta_3^2,\zeta_3,1)$        &      $\zeta_3H_{9}^{(5)}$\\
\hline
$\mathbf{u_{67}} =(1,1,1,2)$     &  $\zeta_3^2(1,\zeta_3^2,\zeta_3,\zeta_3,1,\zeta_3^2,\zeta_3^2,\zeta_3,1)$        &      $\zeta_3^2H_{9}^{(5)}$\\
\hline
$\mathbf{u_{68}} =(2,1,1,2)$     &  $(1,\zeta_3^2,\zeta_3,\zeta_3,1,\zeta_3^2,\zeta_3^2,\zeta_3,1)$        &      $H_{9}^{(5)}$\\
\hline
$\mathbf{u_{69}} =(0,2,1,2)$     &  $\zeta_3^2(1,\zeta_3^2,\zeta_3,\zeta_3,1,\zeta_3^2,\zeta_3^2,\zeta_3,1)$        &      $\zeta_3^2H_{9}^{(5)}$\\
\hline
$\mathbf{u_{70}} =(1,2,1,2)$     &  $(1,\zeta_3^2,\zeta_3,\zeta_3,1,\zeta_3^2,\zeta_3^2,\zeta_3,1)$        &      $H_{9}^{(5)}$\\
\hline
$\mathbf{u_{71}} =(2,2,1,2)$     &  $\zeta_3(1,\zeta_3^2,\zeta_3,\zeta_3,1,\zeta_3^2,\zeta_3^2,\zeta_3,1)$        &      $\zeta_3H_{9}^{(5)}$\\
\hline
$\mathbf{u_{72}} =(0,0,2,2)$     &  $(1,\zeta_3,\zeta_3^2,1,\zeta_3,\zeta_3^2,1,\zeta_3,\zeta_3^2)$        &      $H_{9}^{(1)}$\\
\hline
$\mathbf{u_{73}} =(1,0,2,2)$     &  $\zeta_3^2(1,\zeta_3,\zeta_3^2,1,\zeta_3,\zeta_3^2,1,\zeta_3,\zeta_3^2)$        &      $\zeta_3^2H_{9}^{(1)}$\\
\hline
$\mathbf{u_{74}} =(2,0,2,2)$     &  $\zeta_3(1,\zeta_3,\zeta_3^2,1,\zeta_3,\zeta_3^2,1,\zeta_3,\zeta_3^2)$        &      $\zeta_3H_{9}^{(1)}$\\
\hline
$\mathbf{u_{75}} =(0,1,2,2)$     &  $\zeta_3(1,\zeta_3,\zeta_3^2,1,\zeta_3,\zeta_3^2,1,\zeta_3,\zeta_3^2)$        &      $\zeta_3H_{9}^{(1)}$\\
\hline
$\mathbf{u_{76}} =(1,1,2,2)$     &  $(1,\zeta_3,\zeta_3^2,1,\zeta_3,\zeta_3^2,1,\zeta_3,\zeta_3^2)$        &          $H_{9}^{(1)}$\\
\hline
$\mathbf{u_{77}} =(2,1,2,2)$     &  $\zeta_3^2(1,\zeta_3,\zeta_3^2,1,\zeta_3,\zeta_3^2,1,\zeta_3,\zeta_3^2)$        &      $\zeta_3^2H_{9}^{(1)}$\\
\hline
$\mathbf{u_{78}} =(0,2,2,2)$     &  $\zeta_3^2(1,\zeta_3,\zeta_3^2,1,\zeta_3,\zeta_3^2,1,\zeta_3,\zeta_3^2)$        &      $\zeta_3^2H_{9}^{(1)}$\\
\hline
$\mathbf{u_{79}} =(1,2,2,2)$     &  $\zeta_3(1,\zeta_3,\zeta_3^2,1,\zeta_3,\zeta_3^2,1,\zeta_3,\zeta_3^2)$        &          $\zeta_3H_{9}^{(1)}$\\
\hline
$\mathbf{u_{80}} =(2,2,2,2)$     &  $(1,\zeta_3,\zeta_3^2,1,\zeta_3,\zeta_3^2,1,\zeta_3,\zeta_3^2)$        &      $H_{9}^{(1)}$\\
\hline
\end{longtable}
\end{appendices}

\begin{appendices}
\section{$\mathcal{H}_{\textbf{a}}(\textbf{u})$ for $f$ in Example 4.2}
\begin{longtable}{|c|c|c|c|c|c|}
\caption{Vectors $\mathcal{H}_{\textbf{a}}(\textbf{u})$ for all $\mathbf{u} \in \Z_3^4$} \label{table2}\\
\hline
$\mathbf{u} \in \Z_3^4$ 　&　$\mathcal{H}_{\textbf{a}}(\textbf{u})$  &  $\mathbf{u} \in \Z_3^4$ 　& 　$\mathcal{H}_{\textbf{a}}(\textbf{u}) $    &  $\mathbf{u} \in \Z_3^4$ 　& 　$\mathcal{H}_{\textbf{a}}(\textbf{u}) $\\
\hline

\hline
$(0,0,0,0)$   &    $H_{9}^{(1)}$   &   $(0,0,0,1)$   &    $H_{9}^{(7)}$  &   $(0,0,0,2)$   &    $H_{9}^{(4)}$\\
\hline
$(1,0,0,0)$   &    $H_{9}^{(1)}$   &   $(1,0,0,1)$   &    $H_{9}^{(7)}$  &   $(1,0,0,2)$   &    $H_{9}^{(4)}$\\
\hline
$(2,0,0,0)$   &    $H_{9}^{(1)}$   &   $(2,0,0,1)$   &    $H_{9}^{(7)}$  &   $(2,0,0,2)$   &    $H_{9}^{(4)}$\\
\hline
$(0,1,0,0)$   &    $H_{9}^{(1)}$   &   $(0,1,0,1)$   &    $\zeta_3H_{9}^{(7)}$  &   $(0,1,0,2)$   &    $\zeta_3^2H_{9}^{(4)}$\\
\hline
$(1,1,0,0)$   &    $H_{9}^{(1)}$   &   $(1,1,0,1)$   &    $\zeta_3H_{9}^{(7)}$  &   $(1,1,0,2)$   &    $\zeta_3^2H_{9}^{(4)}$\\
\hline
$(2,1,0,0)$   &    $H_{9}^{(1)}$   &   $(2,1,0,1)$   &    $\zeta_3H_{9}^{(7)}$  &   $(2,1,0,2)$   &    $\zeta_3^2H_{9}^{(4)}$\\
\hline
$(0,2,0,0)$   &    $H_{9}^{(1)}$   &   $(0,2,0,1)$   &    $\zeta_3^2H_{9}^{(7)}$  &   $(0,2,0,2)$   &    $\zeta_3H_{9}^{(4)}$\\
\hline
$(1,2,0,0)$   &    $H_{9}^{(1)}$   &   $(1,2,0,1)$   &    $\zeta_3^2H_{9}^{(7)}$  &   $(1,2,0,2)$   &    $\zeta_3H_{9}^{(4)}$\\
\hline
$(2,2,0,0)$   &    $H_{9}^{(1)}$   &   $(2,2,0,1)$   &    $\zeta_3^2H_{9}^{(7)}$  &   $(2,2,0,2)$   &    $\zeta_3H_{9}^{(4)}$\\
\hline

$(0,0,1,0)$   &    $H_{9}^{(7)}$   &   $(0,0,1,1)$   &    $H_{9}^{(4)}$  &   $(0,0,1,2)$   &    $H_{9}^{(1)}$\\
\hline
$(1,0,1,0)$   &    $\zeta_3^2H_{9}^{(7)}$   &   $(1,0,1,1)$   &    $\zeta_3^2H_{9}^{(4)}$  &   $(1,0,1,2)$   &    $\zeta_3^2H_{9}^{(1)}$\\
\hline
$(2,0,1,0)$   &    $\zeta_3H_{9}^{(7)}$   &   $(2,0,1,1)$   &    $\zeta_3H_{9}^{(4)}$  &   $(2,0,1,2)$   &    $\zeta_3H_{9}^{(1)}$\\
\hline
$(0,1,1,0)$   &    $H_{9}^{(7)}$   &   $(0,1,1,1)$   &    $\zeta_3H_{9}^{(4)}$  &   $(0,1,1,2)$   &    $\zeta_3^2H_{9}^{(1)}$\\
\hline
$(1,1,1,0)$   &    $\zeta_3^2H_{9}^{(7)}$   &   $(1,1,1,1)$   &    $H_{9}^{(4)}$  &   $(1,1,1,2)$   &    $\zeta_3H_{9}^{(1)}$\\
\hline
$(2,1,1,0)$   &    $\zeta_3H_{9}^{(7)}$   &   $(2,1,1,1)$   &    $\zeta_3^2H_{9}^{(4)}$  &   $(2,1,1,2)$   &    $H_{9}^{(1)}$\\
\hline
$(0,2,1,0)$   &    $H_{9}^{(7)}$   &   $(0,2,1,1)$   &    $\zeta_3^2H_{9}^{(4)}$  &   $(0,2,1,2)$   &    $\zeta_3H_{9}^{(1)}$\\
\hline
$(1,2,1,0)$   &    $\zeta_3^2H_{9}^{(7)}$   &   $(1,2,1,1)$   &    $\zeta_3H_{9}^{(4)}$  &   $(1,2,1,2)$   &    $H_{9}^{(1)}$\\
\hline
$(2,2,1,0)$   &    $\zeta_3H_{9}^{(7)}$   &   $(2,2,1,1)$   &    $H_{9}^{(4)}$  &   $(2,2,1,2)$   &    $\zeta_3^2H_{9}^{(1)}$\\
\hline

$(0,0,2,0)$   &    $H_{9}^{(4)}$   &   $(0,0,2,1)$   &    $H_{9}^{(1)}$  &   $(0,0,2,2)$   &    $H_{9}^{(7)}$\\
\hline
$(1,0,2,0)$   &    $\zeta_3H_{9}^{(4)}$   &   $(1,0,2,1)$   &    $\zeta_3H_{9}^{(1)}$  &   $(1,0,2,2)$   &    $\zeta_3H_{9}^{(7)}$\\
\hline
$(2,0,2,0)$   &    $\zeta_3^2H_{9}^{(4)}$   &   $(2,0,2,1)$   &    $\zeta_3^2H_{9}^{(1)}$  &   $(2,0,2,2)$   &    $\zeta_3^2H_{9}^{(7)}$\\
\hline
$(0,1,2,0)$   &    $H_{9}^{(4)}$   &   $(0,1,2,1)$   &    $\zeta_3H_{9}^{(1)}$  &   $(0,1,2,2)$   &    $\zeta_3^2H_{9}^{(7)}$\\
\hline
$(1,1,2,0)$   &    $\zeta_3H_{9}^{(4)}$   &   $(1,1,2,1)$   &    $\zeta_3^2H_{9}^{(1)}$  &   $(1,1,2,2)$   &    $H_{9}^{(7)}$\\
\hline
$(2,1,2,0)$   &    $\zeta_3^2H_{9}^{(4)}$   &   $(2,1,2,1)$   &    $H_{9}^{(1)}$  &   $(2,1,2,2)$   &    $\zeta_3H_{9}^{(7)}$\\
\hline
$(0,2,2,0)$   &    $H_{9}^{(4)}$   &   $(0,2,2,1)$   &    $\zeta_3^2H_{9}^{(1)}$  &   $(0,2,2,2)$   &    $\zeta_3H_{9}^{(7)}$\\
\hline
$(1,2,2,0)$   &    $\zeta_3H_{9}^{(4)}$   &   $(1,2,2,1)$   &    $H_{9}^{(1)}$  &   $(1,2,2,2)$   &    $\zeta_3^2H_{9}^{(7)}$\\
\hline
$(2,2,2,0)$   &    $\zeta_3^2H_{9}^{(4)}$   &   $(2,2,2,1)$   &    $\zeta_3H_{9}^{(1)}$  &   $(2,2,2,2)$   &    $H_{9}^{(7)}$\\
\hline
\end{longtable}
\end{appendices}

\end{document}